\newtheorem{theorem}{Theorem}[section]
\newtheorem{lemma}[theorem]{Lemma}
\newtheorem{corollary}[theorem]{Corollary}
\theoremstyle{definition}
\newtheorem{example}[theorem]{Example}
\numberwithin{equation}{section}
\DeclareMathOperator{\rank}{rank}
\newcommand{\disp}{\displaystyle}
\begin{document}
\title{Moore-Penrose Inverses of the Signless Laplacian and Edge-Laplacian of Graphs}

\author{Ryan Hessert} 
\author{Sudipta Mallik\thanks{Corresponding author}} 
\affil{\small Department of Mathematics and Statistics, Northern Arizona University, 801 S. Osborne Dr.\\ PO Box: 5717, Flagstaff, AZ 86011, USA 

rph53@nau.edu, sudipta.mallik@nau.edu}

\maketitle
\begin{abstract}
The signless Laplacian $Q$ and signless edge-Laplacian $S$ of a given graph may or may not be invertible. The Moore-Penrose inverses of $Q$ and $S$ are studied. In particular, using the incidence matrix, we find combinatorial formulas of the Moore-Penrose inverses of $Q$ and $S$ for trees. Also we present combinatorial formulas of the inverses of $Q$ and $S$ for odd unicyclic graphs.
\end{abstract}

\renewcommand{\thefootnote}{\fnsymbol{footnote}} 
\footnotetext{\emph{2010 Mathematics Subject Classification: 05C50, 15A09\\  
Keywords: Moore-Penrose inverse, Signless Laplacian, Incidence matrix,  Tree, Unicyclic graph}}
\renewcommand{\thefootnote}{\arabic{footnote}} 

\section{Introduction}
Let $G$ be a simple graph on $n$ vertices $1,2,\ldots,n$ and $m$ edges $e_1,e_2,\ldots,e_m$ with the adjacency matrix $A$ and the degree matrix $D$. The {\it signless Laplacian} $Q$ of $G$ is defined as $Q=D+A$. The vertex-edge {\it incidence matrix} $M$ of $G$ is the $n\times m$ matrix whose $(i,j)$-entry is $1$ if vertex $i$ is incident with edge $e_j$ and $0$ otherwise. It is well-known that $Q=MM^T$. The {\it signless edge-Laplacian} $S$ of $G$ is an $m\times m$ matrix whose rows and columns are indexed by edges of $G$ and $S$ is defined as $S=M^TM$.

\begin{theorem}\cite[Prop $2.1$]{CRC}\label{eigenvalue 0}
The smallest eigenvalue of the signless Laplacian of a connected graph is equal to $0$ if and only if the graph is bipartite. In this case $0$ is a simple eigenvalue.
\end{theorem}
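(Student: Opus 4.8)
The plan is to exploit the factorization $Q = MM^T$ recorded in the excerpt, which immediately makes $Q$ positive semidefinite: for every $x \in \R^n$ we have $x^T Q x = x^T M M^T x = \|M^T x\|^2 \ge 0$. Hence every eigenvalue of $Q$ is nonnegative, the smallest eigenvalue equals $0$ precisely when $Q$ is singular, and in that case the multiplicity of $0$ equals $\dim \ker Q$. So the entire statement reduces to understanding $\ker Q$.

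First I would show $\ker Q = \ker M^T$. The inclusion $\ker M^T \subseteq \ker Q$ is clear, and conversely if $Qx = 0$ then $0 = x^T Q x = \|M^T x\|^2$ forces $M^T x = 0$. Next I would translate the condition $M^T x = 0$ into combinatorics: the row of $M^T$ indexed by an edge $e = \{i,j\}$ has $1$'s exactly in positions $i$ and $j$, so $M^T x = 0$ is equivalent to the system $x_i + x_j = 0$ for every edge $\{i,j\}$ of $G$, that is, $x_i = -x_j$ along every edge.

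The core of the argument is then to analyze this sign-alternation system on a connected graph. For the ``if'' direction, if $G$ is bipartite with parts $V_1, V_2$, the vector $x$ defined by $x_i = 1$ for $i \in V_1$ and $x_i = -1$ for $i \in V_2$ satisfies $x_i + x_j = 0$ on every edge, so $x$ is a nonzero element of $\ker M^T = \ker Q$ and $0$ is an eigenvalue. For the ``only if'' direction and for simplicity, I would run a propagation argument: fix a vertex $v$; since $G$ is connected, every other vertex $w$ is joined to $v$ by a path, and walking along that path the constraints $x_i = -x_j$ force $x_w = (-1)^{\ell} x_v$, where $\ell$ is the length of the path. Thus every coordinate of any null vector is determined by $x_v$, so $\dim \ker Q \le 1$, giving simplicity; and a nonzero null vector partitions the vertices into $\{i : x_i = x_v\}$ and $\{i : x_i = -x_v\}$ with every edge crossing between the two classes, exhibiting a bipartition of $G$.

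The step I expect to require the most care is the consistency check inside the propagation argument: two different $v$--$w$ paths could a priori assign conflicting signs to $x_w$. Resolving this is exactly where bipartiteness enters, since consistency holds if and only if every closed walk has even length, i.e. if and only if $G$ has no odd cycle. I would therefore phrase the propagation so that the parity of path lengths is the crux, thereby simultaneously proving that the existence of a nonzero null vector forces $G$ to be bipartite and that, when such a vector exists, it is unique up to scalar.
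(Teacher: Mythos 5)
Your argument is correct and complete. One thing to note at the outset: the paper does not prove this statement at all --- it is quoted verbatim from \cite[Prop.~2.1]{CRC} as a known result --- so there is no in-paper proof to measure yours against. What you have written is the standard self-contained proof: positive semidefiniteness from $Q=MM^T$, the reduction $\ker Q=\ker M^T$, the translation of $M^Tx=0$ into the edge constraints $x_i+x_j=0$, the explicit $\pm1$ null vector on a bipartition for one direction, and the propagation $x_w=(-1)^{\ell}x_v$ along paths for the converse and for simplicity. All of these steps are sound; in particular the dimension bound $\dim\ker Q\le 1$ needs no consistency check (any null vector is already determined by $x_v$ via any single path), and for the ``only if'' direction the sign classes $\{i:x_i=x_v\}$ and $\{i:x_i=-x_v\}$ of a given nonzero null vector directly exhibit the bipartition once you observe $x_v\neq 0$, so the parity-of-closed-walks discussion you flag as delicate is really only needed if one insists on constructing the null vector by propagation rather than from the bipartition. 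It is worth pointing out that your kernel analysis is essentially the same device the authors themselves use later, in the proof of Theorem~\ref{M+}: there they study vectors orthogonal to the columns of $M$, derive $x_i+x_j=0$ on edges and $x_i=(-1)^d x_j$ along paths, and split into the odd-cycle and bipartite cases. So your proposal not only fills in the omitted citation but does so with the technique the paper relies on elsewhere.
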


By Theorem \ref{eigenvalue 0}, the signless Laplacian $Q$ of a connected bipartite graph such as a tree is not invertible. We investigate a generalized inverse of $Q$ and its combinatorial interpretations.

The {\it Moore–Penrose inverse} of an $m\times n$ real matrix $A$, denoted by $A^+$, is the $n\times m$ real matrix that satisfies the following equations \cite{BG}:
$$AA^+A=A, A^+AA^+=A^+, (AA^+)^T=AA^+, (A^+A)^T=A^+A.$$
If a square matrix $A$ is invertible, then $A^+=A^{-1}$.\\

The Moore-Penrose inverse of the incidence matrix of a graph was first studied by Ijira  in 1965 \cite{I}. Bapat studied the same for the Laplacian and edge-Laplacian of trees \cite{B}. Then several articles were published on the same topic for different graphs such as distance regular graphs \cite{AB,ABE}. Meanwhile the signless Laplacian of graphs has emerged to be an active area of research \cite{CRC, HM}. Then the study of the Moore-Penrose inverses of the signless Laplacian of graphs appeared to be an interesting new topic to investigate which led the authors to write this article.  In section 2, we find combinatorial formulas of the Moore-Penrose inverses of the signless Laplacian and edge-Laplacian for trees. In section 3, we do the same for odd unicyclic graphs. Distances among vertices and edges of a graph play a crucial rule in this study.\\

Throughout this article we use the following terminology: The vertices of a graph $G$ on $n$ vertices are labelled as $1,2,\ldots,n$. We write $|G|$ to denote the number of vertices of $G$. An edge $e_i$ of $G$ is written as $e_i=\{l_i,m_i\}$ where $l_i< m_i$. We denote $l_i\in V(G)$ or simply $l_i\in G$  and $e_i=\{l_i,m_i\}\in E(G)$ or simply $e_i=\{l_i,m_i\}\in G$ to state that $l_i$ is a vertex of $G$ and $e_i=\{l_i,m_i\}$ is an edge of $G$ respectively. The set of all edges of $G$ incident at a  vertex $v$ is denoted by $E(v).$  The {\it distance} between two vertices $l$ and $m$ of a connected graph $G$ is the number of edges in a shortest path between $l$ and $m$. We define the distance between vertex $j$ and edge $e_i=\{l_i,m_i\}$, denoted by $d(j,e_i)$ or $d(e_i,j)$, as follows:
$$d(j,e_i):=\min\{ d(j,l_i), d(j,m_i)\}.$$
We also define the distance $d(e_i,e_k)$ between edge $e_i=\{l_i,m_i\}$ and edge $e_k=\{l_k,m_k\}$ as follows:
$$d(e_i,e_k):=\min\{ d(l_i,e_k), d(m_i,e_k)\}.$$

\section{Moore-Penrose Inverses of the Signless Laplacian and Edge-Laplacian of Trees}
Let $G$ be a tree on $n$ vertices with an edge $e_i=\{l_i,m_i\}$  such that $l_i < m_i$. Then $G\setminus e_i$ has two connected components. The \textit{head component} of $G\setminus e_i$, denoted by $G_H(e_i)$, is the connected component of $G\setminus e_i$ containing vertex $m_i$. The \textit{tail component} of $G\setminus e_i$, denoted by $G_T(e_i)$, is the connected component of $G\setminus e_i$ containing vertex $l_i$. Note that $|G_H(e_i)|+|G_T(e_i)|=n$.

Now we introduce an $(n-1)\times n$ matrix $H$ whose rows and columns are indexed by the edges and vertices of tree $G$ respectively and $H=[h_{i,j}]$ is defined as follows:
\begin{equation}\label{H}
h_{i,j}=
 \frac{(-1)^{d(e_i,j)}}{n}\begin{cases} |G_T(e_i)| & \text{ if } j \in G_H(e_i)\\
|G_H(e_i)| & \text{ if } j \in G_T(e_i). 
\end{cases} 
\end{equation}

\begin{example}
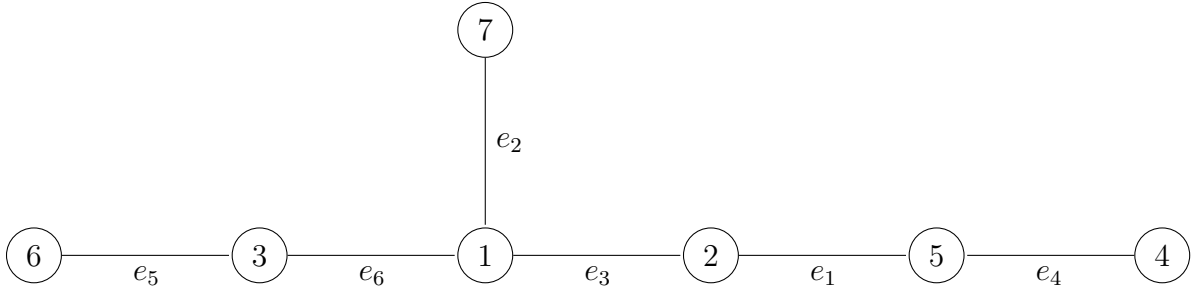
\begin{figure}
        \centering
        \begin{tikzpicture}[shorten > = 1pt, auto, node distance = .5cm]
\tikzset{vertex/.style = {shape = circle, draw, minimum size = 0.5em}}
\node[vertex] (6) at (-6,0) {6};
\node[vertex] (3) at (-3,0) {3};
\node[vertex] (1) at (0,0) {1};  
\node[vertex] (2) at (3,0) {2};
\node[vertex] (5) at (6,0) {5};
\node[vertex] (4) at (9,0) {4};
\node[vertex] (7) at (0,3) {7};
\draw (2) edge [below] node{$e_1$} (5);
\draw (7) edge [right] node{$e_2$} (1);
\draw (1) edge [below] node{$e_3$} (2);
\draw (4) edge [below] node{$e_4$} (5);
\draw (6) edge [below] node{$e_5$} (3);
\draw (3) edge [below] node{$e_6$} (1);
\end{tikzpicture}
        \caption{The smallest asymmetric tree}
        \label{fig:M tree}
    \end{figure}
    
For the tree given in Figure \ref{fig:M tree}, \\
$M= \left[ \begin{array}{rrrrrr}
    0&1&1&0&0&1\\
    1&0&1&0&0&0\\
    0&0&0&0&1&1\\
    0&0&0&1&0&0\\
    1&0&0&1&0&0\\
    0&0&0&0&1&0\\
    0&1&0&0&0&0\\
    \end{array} \right]$
    and $H= \disp\frac{1}{7} \left[ \begin{array}{rrrrrrr}
    -2&2&2&-5&5&-2&2\\
    1&-1&-1&-1&1&1&6\\
    3&4&-3&4&-4&3&-3\\
    1&-1&-1&6&1&1&-1\\
    -1&1&1&1&-1&6&1\\
    2&-2&5&-2&2&-5&-2
    \end{array} \right]$.
\end{example}

\begin{theorem}\label{HM=I}
Let $G$ be a tree on $n$ vertices with the incidence matrix $M$. For the matrix $H$ defined in (\ref{H}), we have $HM=I_{n-1}$.
\end{theorem}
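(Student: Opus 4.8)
The plan is to compute the $(i,k)$-entry of $HM$ directly and verify it equals the Kronecker delta $\delta_{i,k}$. Since column $k$ of $M$ has exactly two nonzero entries—namely $1$'s in rows $l_k$ and $m_k$, where $e_k=\{l_k,m_k\}$—the sum defining the product collapses to just two terms:
$$(HM)_{i,k}=\sum_{j=1}^n h_{i,j}M_{j,k}=h_{i,l_k}+h_{i,m_k}.$$
Everything then reduces to understanding the two entries $h_{i,l_k}$ and $h_{i,m_k}$ of row $i$ of $H$, which are controlled by how the endpoints of $e_k$ sit relative to the cut of $G$ induced by deleting $e_i$.

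The computation rests on two structural facts about trees that I would record first. For a vertex $j\in G_H(e_i)$, the unique $j$-to-$l_i$ path must traverse $e_i$ and hence pass through $m_i$, so $d(e_i,j)=d(m_i,j)$; symmetrically $d(e_i,j)=d(l_i,j)$ when $j\in G_T(e_i)$. Second, for any fixed vertex $v$ and any edge $\{a,b\}$ of a tree, exactly one endpoint is nearer to $v$ and the path to the other extends through it, so $d(v,a)$ and $d(v,b)$ differ by exactly $1$ and therefore have opposite parity.

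For the diagonal case $i=k$, the vertices $l_i$ and $m_i$ are the endpoints of $e_i$ itself, so $d(e_i,l_i)=d(e_i,m_i)=0$, while $m_i\in G_H(e_i)$ and $l_i\in G_T(e_i)$. The definition of $H$ then gives $h_{i,m_i}=|G_T(e_i)|/n$ and $h_{i,l_i}=|G_H(e_i)|/n$, whose sum is $(|G_H(e_i)|+|G_T(e_i)|)/n=n/n=1$. For the off-diagonal case $i\neq k$, the crucial observation is that since $e_k\neq e_i$, the edge $e_k$ survives in $G\setminus e_i$, forcing both endpoints $l_k,m_k$ into the \emph{same} component of $G\setminus e_i$, say $G_H(e_i)$ (the tail case being symmetric). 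Then $H$ assigns $h_{i,l_k}$ and $h_{i,m_k}$ the common magnitude factor $|G_T(e_i)|/n$, differing only through the signs $(-1)^{d(e_i,l_k)}$ and $(-1)^{d(e_i,m_k)}$. Applying the parity fact with $v=m_i$ (using $d(e_i,\cdot)=d(m_i,\cdot)$ on the head), these exponents have opposite parity, the signs cancel, and $(HM)_{i,k}=0$.

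The main obstacle is making the sign cancellation airtight rather than merely plausible: the whole argument hinges on correctly reducing $d(e_i,\cdot)$ to the distance from a single endpoint of $e_i$ on each side of the cut, and on the fact that distances to adjacent vertices differ by exactly one in a tree. Once these two tree lemmas are pinned down and the observation that any edge other than $e_i$ stays within one component of $G\setminus e_i$ is stated cleanly, the three cases close by inspection with no heavy computation.
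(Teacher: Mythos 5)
Your proposal is correct and follows essentially the same route as the paper: the product collapses to $h_{i,l_k}+h_{i,m_k}$, the diagonal case uses $|G_H(e_i)|+|G_T(e_i)|=n$, and the off-diagonal case uses that both endpoints of $e_k$ lie in one component of $G\setminus e_i$ together with the fact that their distances to $e_i$ differ by exactly one, forcing the signs to cancel. Your explicit justification that $d(e_i,j)$ reduces to the distance from the appropriate endpoint of $e_i$ is a slightly more careful statement of what the paper uses implicitly, but the argument is the same.
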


\begin{proof}
For $i,j\in \{1,\ldots,n-1\}$, suppose $e_i=\{l,m\}$ where $l < m$ and $e_j=\{r,s\}$. Let $M=[m_{i,j}]$. Then the $(i,j)$-entry of $HM$ is given by
$$(HM)_{i,j}=\sum^{n}_{k=1} h_{i,k}m_{k,j} = h_{i,r} +h_{i,s}.$$

Case 1. $i=j$\\
In this case, $(HM)_{i,i}= h_{i,l} + h_{i,m}$. Note that $l\in G_T(e_i)$, $m \in G_H(e_i)$, and $d(e_i,l)=d(e_i,m)=0$. Then
$$(HM)_{i,i}=\frac{(-1)^{d(e_i,l)}}{n}|G_H(e_i)|+\frac{(-1)^{d(e_i,m)}}{n}|G_H(e_i)|=\frac{1}{n} |G_H(e_i)| +\frac{1}{n} |G_T(e_i)|=\frac{1}{n}n=1.$$ 

Case 2. $i\neq j$\\
Since $e_i \not= e_j$ and $G$ is a tree, either both $r,s\in G_H(e_i)$ or both $r,s\in G_T(e_i)$. Without loss of generality let $r,s\in G_H(e_i)$. Note that either $d(e_i,r)=d(m,r)=d(m,s)+1$ or $d(e_i,s)=d(m,s)=d(m,r)+1$. If $d(m,r)$ is even, then
$$(HM)_{i,j}= \frac{(-1)^{d(e_i,r)}}{n}|G_H(e_i)|+\frac{(-1)^{d(e_i,s)}}{n}|G_H(e_i)|=\frac{1}{n}|G_H(e_i)|-\frac{1}{n}|G_H(e_i)|=0.$$
If $d(m,r)$ is odd, then 
$$(HM)_{i,j}= \frac{(-1)^{d(e_i,r)}}{n}|G_H(e_i)|+\frac{(-1)^{d(e_i,s)}}{n}|G_H(e_i)|=-\frac{1}{n}|G_H(e_i)|+\frac{1}{n}|G_H(e_i)|=0.$$

Thus $HM=I_{n-1}$.
\end{proof}

For a vertex $i$ in a tree $G$, the set $E(i)$ of all edges of $G$ incident at $i$ are partitioned into two sets $E_H(i)$ and $E_T(i)$ defined as follows:
 $$E_H(i):=\{e_k\in E(G)\;|\; e_k \text{ is incident at } i \text{ and } i\in G_H(e_k)\},$$ 
 $$E_T(i):=\{e_k\in E(G)\;|\; e_k \text{ is incident at } i \text{ and }  i\in G_T(e_k)\}.$$
 %Note if $e_k\in E^+(i)$, then $i\in G_H(e_k)$.
For an edge $e=\{l,m\}$ of a tree $G$, $G\setminus e[l]$ denotes the connected component of $G\setminus e$ that contains vertex $l$.
 
\begin{example}
For tree $G$ in Figure \ref{fig:M tree},
$$E_H(3)=\{e_6\},E_T(3)=\{e_5\},E_H(1)=\varnothing,E_T(1)=\{e_2,e_3,e_6\}.$$
Also $G\setminus e_6[3]$ and $G\setminus e_6[1]$ are shown in Figure \ref{subtree}.

\begin{figure}
        \centering
        \begin{tikzpicture}[shorten > = 1pt, auto, node distance = .5cm]
\tikzset{vertex/.style = {shape = circle, draw, minimum size = 0.5em}}
\node[vertex] (6) at (-6,0) {6};
\node[vertex] (3) at (-3,0) {3};
\node[vertex] (1) at (0,0) {1};  
\node[vertex] (2) at (3,0) {2};
\node[vertex] (5) at (6,0) {5};
\node[vertex] (4) at (9,0) {4};
\node[vertex] (7) at (0,3) {7};
\draw (2) edge [below] node{$e_1$} (5);
\draw (7) edge [right] node{$e_2$} (1);
\draw (1) edge [below] node{$e_3$} (2);
\draw (4) edge [below] node{$e_4$} (5);
\draw (6) edge [below] node{$e_5$} (3);
%\draw (3) edge [below] node{$e_6$} (1);
\node at (-4.5,-1){$G\setminus e_6[3]$};
\node at (4.5,-1){$G\setminus e_6[1]$};
\end{tikzpicture}
        \caption{$G\setminus e_6[3]$ and $G\setminus e_6[1]$ for tree $G$ in Figure \ref{fig:M tree}}
        \label{subtree}
    \end{figure}
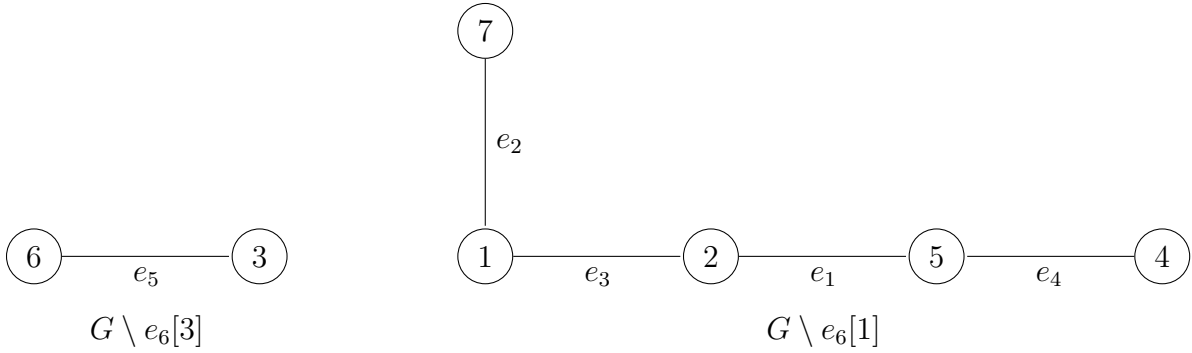
\end{example}

\begin{lemma}\label{tree sum = n-1}
Let $G$ be a tree on $n$ vertices and $i$ be a vertex $G$. Then 
$$\sum_{e_p \in E_H(i)} |G_T(e_{p})| + \sum_{e_q \in E_T(i)} |G_H(e_{q})| = n-1.$$
\end{lemma}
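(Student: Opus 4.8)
The plan is to interpret each summand combinatorially as the size of a specific subtree, and then recognize that these subtrees are exactly the connected components obtained by deleting the vertex $i$.

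First I would observe that for any edge $e$ incident at vertex $i$, removing $e$ splits the tree $G$ into two components, one containing $i$ and one not containing $i$. If $e \in E_H(i)$, then by definition $i \in G_H(e)$, so the component not containing $i$ is $G_T(e)$, whose size is exactly the summand $|G_T(e)|$. Symmetrically, if $e \in E_T(i)$, then $i \in G_T(e)$, so the component not containing $i$ is $G_H(e)$, with size $|G_H(e)|$. Hence every summand on the left-hand side equals the number of vertices lying on the far side of the corresponding edge from $i$.

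Next I would use the tree structure directly. Deleting the vertex $i$ (together with all its incident edges) from $G$ yields exactly $\deg(i)$ connected components, one for each edge incident at $i$: the component associated to an edge $e=\{i,w\}$ is precisely the subtree containing the other endpoint $w$, which coincides with the far-side component of $G\setminus e$ identified above. Since these components are pairwise disjoint and together contain every vertex of $G$ except $i$ itself, the sum of their sizes is $n-1$. Combining this with the summand identification from the previous step gives the claimed equality immediately.

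The only point requiring care is the bijection between edges incident at $i$ and the components of $G\setminus\{i\}$, along with the matching of each component to the correct far-side subtree; this relies essentially on $G$ being a tree, so that removing an edge disconnects it into exactly two pieces and removing a vertex of degree $d$ yields exactly $d$ pieces. I do not anticipate a serious obstacle, since once the summands are reinterpreted the statement reduces to a routine partition-counting observation on $V(G)\setminus\{i\}$.
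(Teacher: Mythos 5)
Your proof is correct and follows essentially the same route as the paper: both identify each summand as the size of the component of $G\setminus e$ on the far side of $i$, and both observe that these components are exactly the connected components of $G$ with vertex $i$ removed, so their sizes sum to $n-1$. No gaps.
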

\begin{proof}
Suppose the vertices adjacent to $i$ are $l_1,l_2,\ldots,l_k$. For each $j=1,2,\ldots,k$, suppose $e_{p_j}$ is the edge between $i$ and $l_j$. Then vertex $i$  together with subtrees $G\setminus e_{p_j}[l_j]$, $j=1,2,\ldots,k$ form a spanning forest of $G$. Thus $$\sum_{e_{p_j} \in E(i)} |G\setminus e_{p_j}[l_j]|=n-1.$$
For each $j=1,2,\ldots,k$, $G\setminus e_{p_j}[l_j]$ is either $G_H(e_{p_j})$ or $G_T(e_{p_j})$. Thus 
$$\sum_{e_p \in E_H(i)} |G_T(e_{p})| + \sum_{e_q \in E_T(i)} |G_H(e_{q})|= \sum_{e_{p_j} \in E(i)} |G\setminus e_{p_j}[l_j]|=n-1.$$
\end{proof}

\begin{theorem}
Let $G$ be a tree on $n$ vertices with the incidence matrix $M$. The matrix $H$ defined in (\ref{H}) is the Moore-Penrose inverse of $M$. 
\end{theorem}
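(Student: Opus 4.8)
The plan is to verify the four Moore–Penrose equations, observing that Theorem \ref{HM=I} already does almost all of the work. Since $HM = I_{n-1}$, three of the four identities are immediate: $MHM = M(HM) = M$, $HMH = (HM)H = H$, and $(HM)^T = I_{n-1}^T = I_{n-1} = HM$. Hence the only nontrivial Penrose condition left is the symmetry of $MH$, i.e. $(MH)^T = MH$. Once this is established, $H$ satisfies all four defining equations and is therefore the Moore–Penrose inverse $M^+$.

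To prove $MH$ is symmetric, I would compute its entries explicitly. Writing $M = [m_{k,i}]$ and noting that $m_{k,i} = 1$ exactly when $e_k$ is incident at vertex $i$, we get $(MH)_{i,j} = \sum_{e_k \in E(i)} h_{k,j}$. For each edge $e_k \in E(i)$, the coefficient $|G_T(e_k)|$ or $|G_H(e_k)|$ appearing in $h_{k,j}$ is the size of the component of $G \setminus e_k$ that does \emph{not} contain $j$, weighted by $(-1)^{d(e_k, j)}$. The goal is to show $(MH)_{i,j} = \delta_{ij} - \frac{(-1)^{d(i,j)}}{n}$, a quantity manifestly symmetric in $i$ and $j$ since $d(i,j) = d(j,i)$. (Conceptually, $MH$ is the orthogonal projection of $\R^n$ onto the column space of $M$, namely $I - \frac{1}{n}vv^T$ where $v$ is the $\pm 1$ vector recording the bipartition of the tree; this is the structural reason symmetry must hold.)

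For the diagonal entries, every edge $e_k \in E(i)$ satisfies $d(e_k, i) = 0$ with $i$ in its own component, so $h_{k,i}$ equals $\tfrac{1}{n}$ times the size of the subtree hanging off $i$ through $e_k$; summing over $E(i)$ and applying Lemma \ref{tree sum = n-1} gives $(MH)_{i,i} = \frac{n-1}{n}$. For off-diagonal entries with $i \neq j$, I would single out the unique edge $e_{k_0} \in E(i)$ lying on the path from $i$ to $j$. For every other edge $e_k \in E(i)$, vertex $j$ sits on $i$'s side of $e_k$, so $d(e_k, j) = d(i,j)$ and the contribution is $\frac{(-1)^{d(i,j)}}{n}$ times the subtree size through $e_k$; for $e_{k_0}$ the vertex $j$ sits on the far side, so $d(e_{k_0}, j) = d(i,j) - 1$ (an opposite sign) and the relevant component has size $n$ minus the subtree through $e_{k_0}$. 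Combining these and using Lemma \ref{tree sum = n-1} to evaluate $\sum_{e_k \in E(i)}(\text{subtree size}) = n-1$, the terms telescope to $(MH)_{i,j} = \frac{(-1)^{d(i,j)}}{n}\big[(n-1) - n\big] = -\frac{(-1)^{d(i,j)}}{n}$, as claimed.

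The main obstacle is precisely the off-diagonal bookkeeping in this last step: one must correctly determine, for each edge $e_k \in E(i)$, which side of $e_k$ contains $j$, and in particular isolate the single edge $e_{k_0}$ toward $j$ whose distance parity is flipped relative to all the others. Getting the sign of that one term right is what produces the uniform factor $(-1)^{d(i,j)}$ and lets Lemma \ref{tree sum = n-1} collapse the entire sum; everything else is routine. With the symmetric formula $(MH)_{i,j} = \delta_{ij} - \frac{(-1)^{d(i,j)}}{n}$ in hand, symmetry of $MH$ is transparent and the proof concludes that $H = M^+$.
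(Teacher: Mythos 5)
Your proposal is correct and follows essentially the same route as the paper: reduce the four Penrose conditions to the symmetry of $MH$ via Theorem \ref{HM=I}, expand $(MH)_{i,j}$ over the edges incident at vertex $i$, isolate the unique edge on the $i$--$j$ path (whose distance parity and relevant component size differ from the rest), and collapse the sum using Lemma \ref{tree sum = n-1}. The only difference is organizational: you package the answer as the manifestly symmetric closed form $(MH)_{i,j}=\delta_{ij}-\frac{(-1)^{d(i,j)}}{n}$, which lets you bypass the paper's four parity subcases and its ``by a similar proof, $(MH)_{j,i}$ is the same'' step.
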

\begin{proof}
By Theorem \ref{HM=I}, $HM = I_{n-1}$. To prove $H=M^+$, it suffices to show that $MH$ is symmetric. Let $i, j\in \{1,\ldots,n\}$ and $i\neq j$. Then the $(i,j)$-entry of $MH$ is given by
$$(MH)_{i,j} 
=\sum_{p:e_p\in E(i)} h_{p,j}
=\sum_{t=1}^k h_{p_t,j} ,$$
where $e_{p_1},e_{p_2},\ldots,e_{p_k}$ are edges incident at $i$. Without loss of generality let $e_{p_1}$ be on the $i-j$ path. Note that $e_{p_1}\in E_H(i)$ if and only if $j\in G_T(e_{p_1})$. Also for $t=2,3,\ldots,k$, $e_{p_t}\in E_H(i)$ if and only if $j\in G_H(e_{p_t})$.

$$(MH)_{i,j} 
=\sum_{p:e_p\in E(i)} h_{p,j}
=\begin{cases}
h_{p_1,j}+\left(\disp\sum_{p:e_p\in E_H(i) \setminus e_{p_1}} h_{p,j}\right) + \left(\disp\sum_{q:e_q\in E_T(i)} h_{q,j}\right) & \text{if } e_{p_1}\in E_H(i)\\
h_{p_1,j}+\left(\disp\sum_{p:e_p\in E_H(i) } h_{p,j}\right) + \left(\disp\sum_{q:e_q\in E_T(i)\setminus e_{p_1}} h_{q,j}\right) & \text{if } e_{p_1}\in E_T(i).
\end{cases}$$

Case 1. $d(i,j)$ is even\\
Since $d(i,j)$ is even,  we have odd $d(e_{p_1},j)$ and even $d(e_{p_t},j)$ for $t=2,3,\ldots,k$.\\

Subcase (a). $e_{p_1}\in E_H(i)$
\begin{align*}
    &(MH)_{i,j}\\
=   &\frac{(-1)^{d(e_{p_1},j)}}{n}|G_H(e_{p_1})|+\left(\disp\sum_{p:e_p\in E_H(i) \setminus e_{p_1}} \frac{(-1)^{d(e_{p},j)}}{n}|G_T(e_{p})|\right) + \left(\disp\sum_{q:e_q\in E_T(i)} \frac{(-1)^{d(e_{q},j)}}{n}|G_H(e_{q})|\right) 
\end{align*}

Since $d(e_{p_1},j)$ is odd and $d(e_{p_t},j)$ is even for $t=2,3,\ldots,k$, we have
$$(MH)_{i,j}
=\frac{1}{n} \left( -|G_H(e_{p_1})|+\disp\sum_{p:e_p\in E_H(i) \setminus e_{p_1}} |G_T(e_{p})| + \disp\sum_{q:e_q\in E_T(i)} |G_H(e_{q})|\right).$$

By Lemma \ref{tree sum = n-1}, 
\begin{align*}
    (MH)_{i,j}&=\frac{1}{n} \left( \left( |G_T(e_{p_1})|-n\right) +\disp\sum_{p:e_p\in E_H(i) \setminus e_{p_1}} |G_T(e_{p})| + \left( n-1-\disp\sum_{p:e_p\in E_H(i)} |G_T(e_{p})|\right)\right)\\
    &=\frac{1}{n} \left( -1 +\disp\sum_{p:e_p\in E_H(i)} |G_T(e_{p})| -\disp\sum_{p:e_p\in E_H(i)} |G_T(e_{p})|\right)\\
    &=-\frac{1}{n}.
\end{align*}

Subcase (b). $e_{p_1}\in E_T(i)$
\begin{align*}
    &(MH)_{i,j}\\
=   &\frac{(-1)^{d(e_{p_1},j)}}{n}|G_T(e_{p_1})| + \left(\disp\sum_{p:e_p\in E_H(i)} \frac{(-1)^{d(e_{p},j)}}{n}|G_T(e_{p})| \right) 
+\left(\disp\sum_{q:e_q\in E_T(i) \setminus e_{p_1}} \frac{(-1)^{d(e_{q},j)}}{n}|G_H(e_{q})|\right)
\end{align*}

Since $d(e_{p_1},j)$ is odd and $d(e_{p_t},j)$ is even for $t=2,3,\ldots,k$, we have
$$(MH)_{i,j}
=\frac{1}{n} \left( -|G_T(e_{p_1})| + \disp\sum_{p:e_p\in E_H(i)} |G_T(e_{p})| +\disp\sum_{q:e_q\in E_T(i) \setminus e_{p_1}} |G_H(e_{q})|\right).$$

By Lemma \ref{tree sum = n-1}, 
\begin{align*}
    (MH)_{i,j}&=\frac{1}{n} \left( \left( |G_H(e_{p_1})|-n\right) 
    + \left( n-1-\disp\sum_{q:e_q\in E_T(i)} |G_H(e_{q})|\right)
    +\disp\sum_{p:e_p\in E_T(i) \setminus e_{p_1}} |G_H(e_{p})|\right)\\
    &=\frac{1}{n} \left( -1  -\disp\sum_{q:e_q\in E_T(i)} |G_H(e_{q})|
    +\disp\sum_{p:e_p\in E_T(i)} |G_H(e_{p})|\right)\\
    &=-\frac{1}{n}.
\end{align*}

By a proof similar to that in Subcases 1(a) and 1(b), we can show that $(MH)_{j,i}=-\frac{1}{n}$. Thus $$(MH)_{i,j}=(MH)_{j,i}=-\frac{1}{n}.$$ 

\vspace{12pt}
Case 2. $d(i,j)$ is odd\\
Since $d(i,j)$ is odd,  we have even $d(e_{p_1},j)$ and odd $d(e_{p_t},j)$ for $t=2,3,\ldots,k$.\\

Subcase (a). $e_{p_1}\in E_H(i)$\\
By a proof similar to that in  Subcase 1(a),
$$(MH)_{i,j}
=\frac{1}{n} \left( |G_H(e_{p_1})|-\disp\sum_{p:e_p\in E_H(i) \setminus e_{p_1}} |G_T(e_{p})| - \disp\sum_{q:e_q\in E_T(i)} |G_H(e_{q})|\right)=\frac{1}{n}.$$

Subcase (b). $e_{p_1}\in E_T(i)$\\
By a proof similar to that in  Subcase 1(b),
$$(MH)_{i,j}
=\frac{1}{n} \left( |G_T(e_{p_1})| - \disp\sum_{p:e_p\in E_H(i)} |G_T(e_{p})| -\disp\sum_{q:e_q\in E_T(i) \setminus e_{p_1}} |G_H(e_{q})|\right)=\frac{1}{n}.$$

By a proof similar to that in  Subcases 2(a) and 2(b), we can show that $(MH)_{j,i}=\frac{1}{n}$. Thus $$(MH)_{i,j}=(MH)_{j,i}=\frac{1}{n}.$$

\vspace{12pt}
From Cases 1 and 2, we conclude that $MH$ is a symmetric matrix.
\end{proof}

\begin{example}
For the tree given in Figure \ref{fig:M tree}, \\
$M= \left[ \begin{array}{rrrrrr}
    0&1&1&0&0&1\\
    1&0&1&0&0&0\\
    0&0&0&0&1&1\\
    0&0&0&1&0&0\\
    1&0&0&1&0&0\\
    0&0&0&0&1&0\\
    0&1&0&0&0&0\\
    \end{array} \right]$
    and $H=M^+= \disp\frac{1}{7} \left[ \begin{array}{rrrrrrr}
    -2&2&2&-5&5&-2&2\\
    1&-1&-1&-1&1&1&6\\
    3&4&-3&4&-4&3&-3\\
    1&-1&-1&6&1&1&-1\\
    -1&1&1&1&-1&6&1\\
    2&-2&5&-2&2&-5&-2
    \end{array} \right]$.\\
We have $M^+M=I_6$ 
and $MM^+= \disp\frac{1}{7} \left[ \begin{array}{rrrrrrr}
    6&1&1&1&-1&-1&1\\
    1&6&-1&-1&1&1&-1\\
    1&-1&6&-1&1&1&-1\\
    1&-1&-1&6&1&1&-1\\
    -1&1&1&1&6&-1&1\\
    -1&1&1&1&-1&6&1\\
    1&-1&-1&-1&1&1&6
    \end{array} \right].$
\end{example}  

Now we find combinatorial formulas for the Moore-Penrose inverses of the signless Laplacian $Q$ and the signless edge-Laplacian $S$ of a tree $G$. First recall that if $M$ is the incidence matrix of $G$, then $Q=MM^T$ and $S=M^TM$. Using $M^+$, we have the Moore-Penrose inverses of $Q$ and $S$ as follows:
$$Q^+=(MM^T)^+=(M^T)^+M^+=(M^+)^TM^+,$$  $$S^+=(M^TM)^+=M^+(M^T)^+=M^+(M^+)^T.$$

In the following results, a shortest path between vertices $i$ and $j$ in a connected graph $G$ is denoted by $P_{i-j}(G)$ or simply $P_{i-j}$.

\begin{lemma}\label{ij-path}
Let $G$ be a tree. Let $e_k$ be an edge and $i$ and $j$ be vertices of $G$. Then the following hold:
\begin{enumerate}
\item[(a)] $e_k$ is on the $ij$-path $P_{i-j}$ if and only if $(i,j)\in (G_H(e_k)\times G_T(e_k)) \cup (G_T(e_k)\times G_H(e_k))$.

\item[(b)] If $e_k$ is not on the $ij$-path $P_{i-j}$, then $d(i,j)$ and $d(e_k,i)+d(e_k,j)$ have the same parity.

\item[(c)] If $e_k$ is on the $ij$-path $P_{i-j}$, then $d(i,j)$ and $d(e_k,i)+d(e_k,j)$ have different parity.
\end{enumerate}
\end{lemma}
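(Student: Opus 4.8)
The plan is to settle part (a) as a standard structural fact about trees and then exploit it to control, in parts (b) and (c), which component of $G\setminus e_k$ each of $i$ and $j$ occupies. For (a), I would argue that since $G$ is a tree the $i$-$j$ path is unique and deleting $e_k$ splits $G$ into exactly the two components $G_T(e_k)$ and $G_H(e_k)$; this unique path traverses $e_k$ if and only if $i$ and $j$ lie in different components, that is, one of them in $G_H(e_k)$ and the other in $G_T(e_k)$, which is precisely the asserted membership.

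The engine for (b) and (c) is a preliminary observation identifying which endpoint of $e_k=\{l_k,m_k\}$ realizes $d(e_k,v)=\min\{d(l_k,v),d(m_k,v)\}$. I would first record that $d(e_k,v)=d(m_k,v)$ when $v\in G_H(e_k)$ and $d(e_k,v)=d(l_k,v)$ when $v\in G_T(e_k)$, the point being that reaching the far endpoint forces the path to cross $e_k$ and hence adds $1$ to the distance, so the minimum is attained at the endpoint lying in $v$'s own component.

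With this in hand, part (c) is immediate: by (a) the vertices $i$ and $j$ sit in opposite components, say $i\in G_T(e_k)$ and $j\in G_H(e_k)$, so the path decomposes as a segment $i\rightsquigarrow l_k$, the edge $e_k$, and a segment $m_k\rightsquigarrow j$, yielding the exact identity $d(i,j)=d(l_k,i)+1+d(m_k,j)=d(e_k,i)+d(e_k,j)+1$; the two quantities then differ by $1$ and so have opposite parity. For part (b), (a) places $i$ and $j$ in the same component, say $G_H(e_k)$, whence the preliminary observation gives $d(e_k,i)+d(e_k,j)=d(m_k,i)+d(m_k,j)$. Fixing a proper $2$-coloring $c$ of the bipartite tree $G$ and using $d(u,v)\equiv c(u)+c(v)\pmod 2$ for all vertices, I obtain $d(m_k,i)+d(m_k,j)\equiv c(i)+c(j)\equiv d(i,j)\pmod 2$, the same parity.

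I expect the main obstacle to be part (b), since---unlike (c)---there is no exact additive identity, and the sum of two distances to the common vertex $m_k$ must be tied to $d(i,j)$ purely at the level of parity. The bipartite-coloring fact dispatches this cleanly; an equivalent route is the median-vertex decomposition $d(m_k,i)+d(m_k,j)=d(i,j)+2\,d(m_k,w)$, where $w$ is the vertex at which the three paths among $i$, $j$, $m_k$ meet. The remaining work is bookkeeping: applying the component dichotomy from (a) to select the correct endpoint in each edge-to-vertex distance.
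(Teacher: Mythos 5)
Your proof is correct, and for parts (a) and (c) it follows essentially the same path as the paper: (a) is the same component dichotomy for $G\setminus e_k$, and (c) rests on the same exact identity $d(e_k,i)+d(e_k,j)=d(i,j)-1$ obtained by splitting the unique $i$--$j$ path at $e_k$. Part (b) is where you genuinely diverge, and your route is the more robust one. The paper proves (b) by asserting that when $e_k\notin P_{i-j}$ one has $d(e_k,i)=d(e_k,j)+d(i,j)$ or $d(e_k,j)=d(e_k,i)+d(i,j)$; that additive identity presupposes that one of $i,j$ lies on the path from $e_k$ to the other, which need not happen. For instance, in the star $K_{1,3}$ with center $c$ and leaves $i$, $j$, $v$, taking $e_k=\{c,v\}$ gives $d(e_k,i)=d(e_k,j)=1$ and $d(i,j)=2$, so neither equality holds (the parity conclusion of course survives). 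Your two alternatives --- the proper $2$-coloring congruence $d(u,v)\equiv c(u)+c(v)\pmod 2$ for the bipartite tree, or the median-vertex identity $d(m_k,i)+d(m_k,j)=d(i,j)+2\,d(m_k,w)$ --- cover the general configuration, and the latter is precisely the correct general form of the identity the paper was reaching for. What each approach buys: the paper's is shorter when it applies, while yours costs one preliminary observation (that $d(e_k,v)$ is attained at the endpoint of $e_k$ in $v$'s own component, justified by the extra step needed to cross $e_k$) but is airtight for every placement of $i$ and $j$; keep that observation explicit in the final write-up, since both (b) and (c) lean on it.
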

\begin{proof}
\begin{enumerate}
    \item[(a)]  Let $e_k \in P_{i-j}$. Note that $i \in G_H(e_k)$ if and only if  $j \in G_T(e_k)$. Thus 
    $$(i,j) \in (G_H(e_k) \times G_T(e_k)) \cup (G_T(e_k) \times G_H(e_k)).$$
    
    Conversely suppose that $(i,j) \in (G_H(e_k) \times G_T(e_k)) \cup (G_T(e_k) \times G_H(e_k)).$ This implies that $i \in G_H(e_k)$ if and only if  $j \in G_T(e_k)$. Then $e_k$ is on the $ij$-path $P_{i-j}$. 
    
    \item[(b)]  Suppose $e_k$ is not on the $ij$-path $P_{i-j}$. Then $d(e_k,i)=d(e_k,j)+d(i,j)$ or $d(e_k,j)=d(e_k,i)+d(i,j)$. Then $d(e_k,i)+d(e_k,j)$ is $2d(e_k,j)+d(i,j)$ or $2d(e_k,i)+d(i,j)$. Thus $d(i,j)$ and $d(e_k,i)+d(e_k,j)$ have the same parity.
    
    \item[(c)]  Suppose $e_k$ is on the $ij$-path $P_{i-j}$. Then $d(e_k,i)+d(e_k,j)=d(i,j)-1$. Thus $d(i,j)$ and $d(e_k,i)+d(e_k,j)$ have different parity.
\end{enumerate}
\end{proof}

\begin{theorem}
Let $G$ be a tree on $n$ vertices $1,2,\ldots,n$ with edges $e_1,e_2,\ldots,e_{n-1}$ and the signless Laplacian matrix $Q$. Then the Moore-Penrose inverse $Q^+=[q_{i,j}^+]$ of $Q$ is given by
$$q^+_{i,j}=\frac{(-1)^{d(i,j)}}{n^2}\left( \sum_{k:  i,j\in G_H(e_k)} |G_T(e_k)|^2
+\sum_{k: i,j\in G_T(e_k)} |G_H(e_k)|^2
-\sum_{k: e_k\in P_{i-j}} |G_H(e_k)||G_T(e_k)|  \right). $$
\end{theorem}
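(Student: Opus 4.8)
The plan is to compute $Q^+$ directly from the factorization $Q^+=(M^+)^TM^+$ established just before the statement, together with the fact that $M^+=H$ with entries given by (\ref{H}). Since $H$ is the $(n-1)\times n$ matrix indexed by edges against vertices, the $(i,j)$-entry of $Q^+=H^TH$ is the inner product of the $i$th and $j$th columns of $H$, namely
$$q^+_{i,j}=\sum_{k=1}^{n-1} h_{k,i}\,h_{k,j}.$$
My strategy is to evaluate each summand using (\ref{H}) and then reorganize the sum according to the position of the vertices $i$ and $j$ relative to each edge $e_k$.

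For a fixed edge $e_k$, deleting $e_k$ splits the tree into $G_H(e_k)$ and $G_T(e_k)$, and each of $i,j$ lies in exactly one of these. This partitions the index set $\{1,\dots,n-1\}$ into three classes: (A) both $i,j\in G_H(e_k)$; (B) both $i,j\in G_T(e_k)$; and (C) $i,j$ separated by $e_k$, i.e.\ one in $G_H(e_k)$ and the other in $G_T(e_k)$. By Lemma \ref{ij-path}(a), class (C) is exactly $\{k: e_k\in P_{i-j}\}$. For each class I substitute (\ref{H}): in (A) both factors carry $|G_T(e_k)|$, giving $h_{k,i}h_{k,j}=\frac{(-1)^{d(e_k,i)+d(e_k,j)}}{n^2}|G_T(e_k)|^2$; in (B) both factors carry $|G_H(e_k)|$, giving $|G_H(e_k)|^2$; and in (C) the factors carry $|G_T(e_k)|$ and $|G_H(e_k)|$, giving the product $|G_H(e_k)|\,|G_T(e_k)|$.

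The key step, and the one I expect to be the crux, is converting the edge-dependent sign $(-1)^{d(e_k,i)+d(e_k,j)}$ into the single global sign $(-1)^{d(i,j)}$, which is what permits the clean factorization in the displayed formula. Here Lemma \ref{ij-path}(b),(c) do the work: in classes (A) and (B) the edge $e_k$ is not on $P_{i-j}$, so $d(e_k,i)+d(e_k,j)$ and $d(i,j)$ have the same parity and $(-1)^{d(e_k,i)+d(e_k,j)}=(-1)^{d(i,j)}$; in class (C) the edge lies on the path, the parities differ, and $(-1)^{d(e_k,i)+d(e_k,j)}=-(-1)^{d(i,j)}$. This produces the $+,+,-$ sign pattern on the three sums and lets me pull out the common factor $(-1)^{d(i,j)}/n^2$, recovering the claimed expression.

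Finally I would remark that the formula remains valid when $i=j$: then $d(i,i)=0$, the path $P_{i-i}$ contains no edges so class (C) is empty, and classes (A),(B) account for every edge, giving $q^+_{i,i}=\frac{1}{n^2}\big(\sum_{k:\,i\in G_H(e_k)}|G_T(e_k)|^2+\sum_{k:\,i\in G_T(e_k)}|G_H(e_k)|^2\big)$, consistent with the general statement. Beyond the careful application of Lemma \ref{ij-path} to control the signs, the argument is routine case bookkeeping and I foresee no substantive obstacle.
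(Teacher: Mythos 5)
Your proposal is correct and follows essentially the same route as the paper: expand $q^+_{i,j}=\sum_k h_{k,i}h_{k,j}$, split the edges into the three classes via Lemma \ref{ij-path}(a), and use Lemma \ref{ij-path}(b),(c) to replace $(-1)^{d(e_k,i)+d(e_k,j)}$ by $\pm(-1)^{d(i,j)}$, including the same closing remark on the $i=j$ case.
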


\begin{proof}
Since $M^+=H=[h_{i,j}]$ and $Q^+=(M^+)^TM^+$,
\begin{align*}
q^+_{i,j}&=\sum_{k:e_k\in E(G)} h_{k,i}h_{k,j} \\ 
&=\sum_{k:  i,j\in G_H(e_k)} h_{k,i}h_{k,j}
+\sum_{k: i,j\in G_T(e_k)} h_{k,i}h_{k,j}
+\sum_{k: e_k\in P_{i-j}} h_{k,i}h_{k,j}\\
&=\sum_{k:  i,j\in G_H(e_k)} \frac{(-1)^{d(e_k,i)+d(e_k,j)}}{n^2} |G_T(e_k)|^2
+\sum_{k: i,j\in G_T(e_k)}  \frac{(-1)^{d(e_k,i)+d(e_k,j)}}{n^2} |G_H(e_k)|^2\\
&\hspace{96pt}
+\sum_{k: e_k\in P_{i-j}}  \frac{(-1)^{d(e_k,i)+d(e_k,j)}}{n^2}  |G_H(e_k)||G_T(e_k)| \hspace{12pt} (\text{by Lemma } \ref{ij-path}(a))\\
&=\sum_{k:  i,j\in G_H(e_k)} \frac{(-1)^{d(i,j)}}{n^2} |G_T(e_k)|^2
+\sum_{k: i,j\in G_T(e_k)}  \frac{(-1)^{d(i,j)}}{n^2} |G_H(e_k)|^2\\
&\hspace{96pt}
+\sum_{k: e_k\in P_{i-j}}  \frac{-(-1)^{d(i,j)}}{n^2}  |G_H(e_k)||G_T(e_k)| \hspace{12pt} (\text{by Lemma } \ref{ij-path}(b)(c))\\
&=\frac{(-1)^{d(i,j)}}{n^2}\left( \sum_{k:  i,j\in G_H(e_k)} |G_T(e_k)|^2
+\sum_{k: i,j\in G_T(e_k)} |G_H(e_k)|^2
-\sum_{k: e_k\in P_{i-j}} |G_H(e_k)||G_T(e_k)|  \right).
\end{align*}

Note that the preceding formula is still true for $i=j$ and it becomes 
$$q^+_{i,i}=\frac{1}{n^2}\left( \sum_{k:  i\in G_H(e_k)} |G_T(e_k)|^2
+\sum_{k: i\in G_T(e_k)} |G_H(e_k)|^2  \right).$$
\end{proof}

\begin{example}
For the tree given in Figure \ref{fig:M tree},
$$Q= \left[ \begin{array}{rrrrrrr}
    3&1&1&0&0&0&1\\
    1&2&0&0&1&0&0\\
    1&0&2&0&0&1&0\\
    0&0&0&1&1&0&0\\
    0&1&0&1&2&0&0\\
    0&0&1&0&0&1&0\\
    1&0&0&0&0&0&1\\
    \end{array} \right]
\text{ and }
Q^+=\frac{1}{49} \left[ \begin{array}{rrrrrrr}
    20&1&-6&22&-15&-1&-13\\
    1&27&-15&6&-13&22&-8\\
    -6&-15&41&-36&29&-34&-1\\
    22&6&-36&83&-41&43&-29\\
    -15&-13&29&-41&48&-36&22\\
    -1&22&-34&43&-36&76&8\\
    -13&-8&-1&-29&22&8&55\\
    \end{array} \right].$$
\end{example}

For a tree $G$ with two distinct edges $e_i$ and $e_j$, $G\setminus \{e_i,e_j\}$ have three connected components. The component of  $G\setminus \{e_i,e_j\}$ that contains vertices incident with $e_i$ and $e_j$ is denoted by $G[e_i,e_j]$. The component of  $G\setminus \{e_i,e_j\}$ that contains a vertex incident with $e_i$ and no vertex incident with $e_j$ is denoted by $G[e_i,e_j)$. Similarly the component of  $G\setminus \{e_i,e_j\}$ that contains a vertex incident with $e_i$ and no vertex incident with $e_j$ is denoted by $G(e_i,e_j]$.

\begin{example}
For tree $G$ in Figure \ref{fig:M tree}, $G[e_6,e_1)$, $G[e_6,e_1]$, and $G(e_6,e_1]$ are given in Figure \ref{fig:G[e_i,e_j]}.

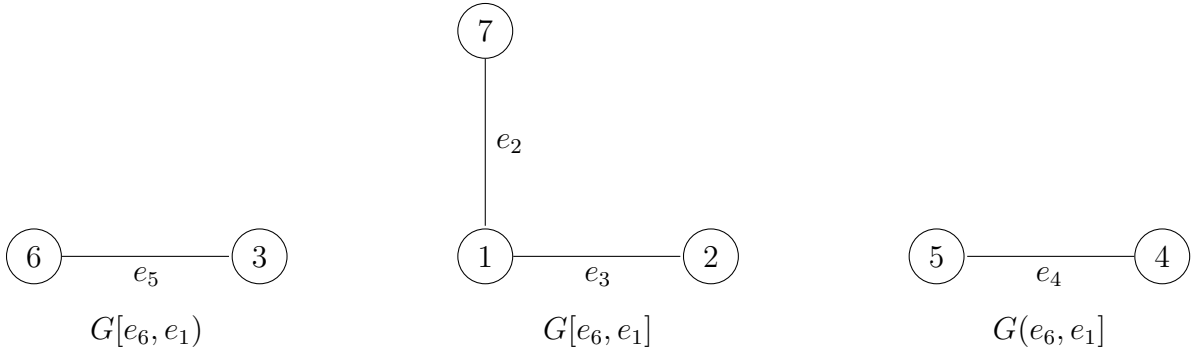
\begin{figure}
        \centering
        \begin{tikzpicture}[shorten > = 1pt, auto, node distance = .5cm]
\tikzset{vertex/.style = {shape = circle, draw, minimum size = 0.5em}}
\node[vertex] (6) at (-6,0) {6};
\node[vertex] (3) at (-3,0) {3};
\node[vertex] (1) at (0,0) {1};  
\node[vertex] (2) at (3,0) {2};
\node[vertex] (5) at (6,0) {5};
\node[vertex] (4) at (9,0) {4};
\node[vertex] (7) at (0,3) {7};
%\draw (2) edge [below] node{$e_1$} (5);
\draw (7) edge [right] node{$e_2$} (1);
\draw (1) edge [below] node{$e_3$} (2);
\draw (4) edge [below] node{$e_4$} (5);
\draw (6) edge [below] node{$e_5$} (3);
%\draw (3) edge [below] node{$e_6$} (1);
\node at (-4.5,-1){$G[e_6,e_1)$};
\node at (1.5,-1){$G[e_6,e_1]$};
\node at (7.5,-1){$G(e_6,e_1]$};
\end{tikzpicture}
        \caption{$G[e_6,e_1)$, $G[e_6,e_1]$, and $G(e_6,e_1]$ for tree $G$ in Figure \ref{fig:M tree}}
        \label{fig:G[e_i,e_j]}
    \end{figure}
\end{example}

\begin{lemma}\label{parity G[e_i,e_j]}
Let $G$ be a tree. Let $k$ be a vertex and $e_i$ and $e_j$ be distinct edges of $G$. Then the following hold:
\begin{enumerate}
\item[(a)] If $k$ is in $G[e_i,e_j]$, then $d(e_i,e_j)$ and $d(e_i,k)+d(e_j,k)$ have the same parity.

\item[(b)]  If $k$ is not in $G[e_i,e_j]$, then $d(e_i,e_j)$ and $d(e_i,k)+d(e_j,k)$ have different parity.
\end{enumerate}
\end{lemma}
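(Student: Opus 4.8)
The plan is to rewrite the edge distances $d(e_i,k)$, $d(e_j,k)$ and $d(e_i,e_j)$ as ordinary vertex distances between suitably chosen endpoints, and then to read off parities using the fact that a tree is bipartite. Write $e_i=\{l_i,m_i\}$ and $e_j=\{l_j,m_j\}$, and let $u\in e_i$ and $v\in e_j$ be the two \emph{inner} endpoints, that is, the endpoints lying on the unique path joining $e_i$ and $e_j$ in $G$; let $u'$ and $v'$ be the remaining \emph{outer} endpoints. With this labelling, $G[e_i,e_j]$ is the component of $G\setminus\{e_i,e_j\}$ containing both $u$ and $v$, while $G[e_i,e_j)$ contains $u'$ and $G(e_i,e_j]$ contains $v'$. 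Since the path from $e_i$ to $e_j$ runs from $u$ to $v$ and any route through $u'$ or $v'$ is strictly longer, $d(e_i,e_j)=d(u,v)$.

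The key structural fact I would establish next is that for any vertex $k$ the minimum in $d(e_i,k)=\min\{d(l_i,k),d(m_i,k)\}$ is realized by the endpoint of $e_i$ lying in the same component of $G\setminus e_i$ as $k$, because reaching the other endpoint forces one extra step across $e_i$. Removing $e_i$ isolates $u'$ together with exactly the vertices of $G[e_i,e_j)$, so the near endpoint of $e_i$ to $k$ is $u'$ when $k\in G[e_i,e_j)$ and is $u$ otherwise. Symmetrically, removing $e_j$ isolates $v'$ with the vertices of $G(e_i,e_j]$, so the near endpoint of $e_j$ to $k$ is $v'$ when $k\in G(e_i,e_j]$ and is $v$ otherwise.

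To finish, fix a proper $2$-colouring $c:V(G)\to\{0,1\}$, so that $d(x,y)\equiv c(x)+c(y)\pmod 2$ for all vertices and $c(u)+c(u')\equiv c(v)+c(v')\equiv 1\pmod 2$. Letting $a$ and $b$ denote the near endpoints of $e_i$ and $e_j$ to $k$, the previous step gives
$$d(e_i,k)+d(e_j,k)\equiv c(a)+c(b)\pmod 2,\qquad d(e_i,e_j)\equiv c(u)+c(v)\pmod 2.$$
Part (a) is the case $k\in G[e_i,e_j]$, where $a=u$ and $b=v$, so the two quantities agree modulo $2$; part (b) splits into $k\in G[e_i,e_j)$ (giving $a=u'$ and $b=v$) and $k\in G(e_i,e_j]$ (giving $a=u$ and $b=v'$), and in each subcase exactly one colour flips, so the parities differ. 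The only delicate point, and the step I would treat most carefully, is the identification in the second paragraph: one must verify that membership in $G[e_i,e_j)$ and in $G(e_i,e_j]$ corresponds exactly to $k$ lying on the outer side of $e_i$, respectively of $e_j$, and that these describe different components, so that at most one colour flip occurs. Once the near endpoints are pinned down, the parity count is immediate.
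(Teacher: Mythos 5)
Your proof is correct, but it follows a genuinely different route from the paper's. The paper argues directly with exact distance identities along paths in the tree: for (a) it writes $d(e_i,k)+d(e_j,k)=2d+d(e_i,e_j)$, where $d$ is the distance from $k$ to the path between $e_i$ and $e_j$, and for (b) it observes that one of $d(e_i,k)$, $d(e_j,k)$ exceeds the other by exactly $d(e_i,e_j)+1$, so the sum equals $d(e_i,e_j)+1$ plus an even number. You instead reduce each edge distance to a vertex distance to the \emph{near} endpoint (correctly justified: reaching the far endpoint costs one extra crossing of the edge), and then read off all parities from a proper $2$-colouring via $d(x,y)\equiv c(x)+c(y)\pmod 2$. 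Both arguments are sound and elementary, and your identification of near endpoints with membership in $G[e_i,e_j]$, $G[e_i,e_j)$, $G(e_i,e_j]$ is accurate, since these three components partition $V(G)$, so exactly one colour flips in each subcase of (b). What the paper's version buys is that it yields exact quantities rather than only residues, and the same telescoping style transfers to Lemma~\ref{parity of e_i,e_j} for odd unicyclic graphs, where your $2$-colouring tool is unavailable because such graphs are not bipartite. What your version buys is that all case analysis collapses into the single question of which endpoint of each edge is nearer to $k$, after which the parity bookkeeping is automatic; it also makes transparent why only the component containing $k$ matters and not the exact position of $k$ within it.
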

\begin{proof}
\begin{enumerate}
    \item[(a)] Suppose $k$ is in $G[e_i,e_j]$. Suppose $d$ is the shortest distance between $k$ and a vertex on the path between $e_i$ and $e_j$. Then $d(e_i,k)+d(e_j,k)=2d+d(e_i,e_j)$. Thus $d(e_i,e_j)$ and $d(e_i,k)+d(e_j,k)$ have the same parity.

    \item[(b)] Suppose $k$ is not in $G[e_i,e_j]$. Then $d(e_i,e_j)+1$ is $d(e_i,k)-d(e_j,k)$ or $d(e_j,k)-d(e_i,k)$. So $d(e_i,k)+d(e_j,k)$ is $2d(e_i,k)+d(e_i,e_j)+1$ or $2d(e_j,k)+d(e_i,e_j)+1$. Thus $d(e_i,e_j)$ and $d(e_i,k)+d(e_j,k)$ have different parity.
\end{enumerate}
\end{proof}

\begin{theorem}
Let $G$ be a tree on $n$ vertices $1,2,\ldots,n$ with edges $e_1,e_2,\ldots,e_{n-1}$ and the incidence matrix $M$. Let $S=M^TM$ be the signless edge-Laplacian of $G$. Then the Moore-Penrose inverse $S^+=[s_{i,j}^+]$ of $S$ is given by
%$$ s^+_{i,j}=(-1)^{d(e_i,e_j)}\frac{p(e_i,e_j)}{n},$$
$$ s^+_{i,j}=\frac{(-1)^{d(e_i,e_j)}}{n}\begin{cases} 
|G_H(e_i)||G_T(e_i)| & \text{ if } e_i=e_j\\
-|G[e_i,e_j)| |G(e_i,e_j]| & \text{ if } e_i\neq e_j. \end{cases}$$
\end{theorem}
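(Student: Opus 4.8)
The plan is to compute $S^+$ from the identity already derived in the excerpt, $S^+ = M^+(M^+)^T = HH^T$, so that $s^+_{i,j} = \sum_{k=1}^n h_{i,k}h_{j,k}$, a sum over the vertices $k$ of $G$. The diagonal case $e_i = e_j$ is immediate: splitting the sum according to whether $k \in G_H(e_i)$ or $k \in G_T(e_i)$ and using $|G_H(e_i)| + |G_T(e_i)| = n$ gives $s^+_{i,i} = \frac{1}{n^2}\big(|G_H(e_i)|\,|G_T(e_i)|^2 + |G_T(e_i)|\,|G_H(e_i)|^2\big) = \frac{1}{n}|G_H(e_i)|\,|G_T(e_i)|$, which is the claimed value since $d(e_i,e_i)=0$.

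For the off-diagonal case $e_i \neq e_j$, I would partition the vertex set according to the three components of $G \setminus \{e_i,e_j\}$, writing $a = |G[e_i,e_j)|$, $b = |G[e_i,e_j]|$, and $c = |G(e_i,e_j]|$, so that $a+b+c=n$. The crucial observation is that each factor $h_{i,k}$ has magnitude $\frac{1}{n}$ times the size of the component of $G \setminus e_i$ that does not contain $k$ (irrespective of the head/tail labelling, since by definition $h_{i,k}$ uses $|G_T(e_i)|$ exactly when $k \in G_H(e_i)$ and $|G_H(e_i)|$ when $k \in G_T(e_i)$); consequently $|h_{i,k}|$ and $|h_{j,k}|$ are each constant on every one of the three components. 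I would then record these magnitudes: for $k \in G[e_i,e_j)$ they are $\frac{b+c}{n}$ and $\frac{c}{n}$; for $k$ in the middle component $G[e_i,e_j]$ they are $\frac{a}{n}$ and $\frac{c}{n}$; and for $k \in G(e_i,e_j]$ they are $\frac{a}{n}$ and $\frac{a+b}{n}$.

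The signs are supplied by Lemma \ref{parity G[e_i,e_j]}: the exponent $d(e_i,k)+d(e_j,k)$ has the same parity as $d(e_i,e_j)$ exactly when $k$ lies in the middle component $G[e_i,e_j]$, so the product $h_{i,k}h_{j,k}$ carries the sign $(-1)^{d(e_i,e_j)}$ on the middle component and $-(-1)^{d(e_i,e_j)}$ on each of the two outer components. Collecting the three blocks then gives
$$n^2 s^+_{i,j} = (-1)^{d(e_i,e_j)}\Big(-a(b+c)c + b\,(ac) - c\,a(a+b)\Big) = (-1)^{d(e_i,e_j)}\, ac\big(-(a+b+c)\big) = -(-1)^{d(e_i,e_j)}\,n\,ac,$$
and dividing by $n^2$ yields exactly $s^+_{i,j} = \frac{(-1)^{d(e_i,e_j)}}{n}\big(-|G[e_i,e_j)|\,|G(e_i,e_j]|\big)$.

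I expect the main obstacle to be the bookkeeping in the second paragraph: correctly determining, for a vertex $k$ in each of the three components, which side of $e_i$ (and of $e_j$) it lies on, and hence which component size appears as the magnitude of $h_{i,k}$ (respectively $h_{j,k}$). Once the four ``opposite-side'' sizes $b+c$, $a$, $c$, $a+b$ are pinned down, the final simplification is a short algebraic telescoping in which the bracketed expression collapses to $-(a+b+c) = -n$; the only genuine care needed is in that identification step, since a mislabelling there would alter the magnitudes without affecting the signs and so quietly break the cancellation.
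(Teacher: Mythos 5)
Your proposal is correct and follows the same basic route as the paper: expand $s^+_{i,j}=\sum_k h_{i,k}h_{j,k}$, split the sum over the three components of $G\setminus\{e_i,e_j\}$, and use Lemma \ref{parity G[e_i,e_j]} to convert every sign to $\pm(-1)^{d(e_i,e_j)}$. The one genuine difference is in the bookkeeping of magnitudes: the paper runs four separate cases according to whether $(e_i,e_j)$ lies in $G_H(e_j)\times G_H(e_i)$, $G_T(e_j)\times G_T(e_i)$, etc., identifying $G[e_i,e_j)$ and $G(e_i,e_j]$ with head/tail components in each case, whereas your single observation that $|h_{i,k}|$ equals $\tfrac{1}{n}$ times the size of the side of $e_i$ opposite $k$ makes the magnitudes $\tfrac{b+c}{n},\tfrac{a}{n},\tfrac{c}{n},\tfrac{a+b}{n}$ case-independent and collapses the whole off-diagonal computation to the one telescoping identity $-(b+c)+b-(a+b)=-n$; I verified the three magnitude pairs and the final cancellation, and they are right.
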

\begin{proof}
Since $M^+=H=[h_{i,j}]$ and $S^+=M^+(M^+)^T$, $s^+_{i,j}=\disp\sum_{k\in V(G)} h_{i,k}h_{j,k}$.  For $i=j$,
\begin{align*}
s^+_{i,i}&=\sum_{k\in V(G)} h_{i,k}^2 \\ 
&=\sum_{k\in G_H(e_i)} h_{i,k}^2  +\sum_{k\in G_T(e_i)} h_{i,k}^2 \\ 
&=\sum_{k\in G_H(e_i)} \frac{|G_T(e_i)|^2}{n^2}  +\sum_{k\in G_T(e_i)} \frac{|G_H(e_i)|^2}{n^2} \\
&=|G_H(e_i)|\frac{|G_T(e_i)|^2}{n^2}+|G_T(e_i)| \frac{|G_H(e_i)|^2}{n^2}\\
&=|G_H(e_i)||G_T(e_i)| \frac{|G_T(e_i)|+|G_H(e_i)|}{n^2}\\
&=\frac{|G_H(e_i)||G_T(e_i)|}{n} \hspace{12pt} (\text{since } |G_T(e_i)|  +|G_H(e_i)|=n).
\end{align*}

Let $i\neq j$. Then
$$s^+_{i,j}=\sum_{k\in V(G)} h_{i,k}h_{j,k} =\sum_{k\in G[e_i,e_j]} h_{i,k}h_{j,k}+\sum_{k\in G[e_i,e_j)} h_{i,k}h_{j,k}+\sum_{k\in G(e_i,e_j]} h_{i,k}h_{j,k}.$$

Case 1. $(e_i,e_j)\in G_H(e_j)\times G_H(e_i)$\\
This case is equivalent to having $G[e_i,e_j)=G_T(e_i)$ and $G(e_i,e_j]=G_T(e_j)$. Note that $|G[e_i,e_j]|=n-|G_T(e_i)|-|G_T(e_j)|=|G_H(e_i)|-|G_T(e_j)|$.

\begin{align*}
s^+_{i,j}&=\sum_{k\in G[e_i,e_j]} h_{i,k}h_{j,k} 
+\sum_{k\in G[e_i,e_j)} h_{i,k}h_{j,k}
+\sum_{k\in G(e_i,e_j]} h_{i,k}h_{j,k}\\
&=\sum_{k\in G[e_i,e_j]} \frac{(-1)^{d(e_i,k)+d(e_j,k)}}{n^2} |G_T(e_i)| |G_T(e_j)|
+\sum_{k\in G[e_i,e_j)} \frac{(-1)^{d(e_i,k)+d(e_j,k)}}{n^2} |G_H(e_i)| |G_T(e_j)|\\
& \hspace{155pt} +\sum_{k\in G(e_i,e_j]} \frac{(-1)^{d(e_i,k)+d(e_j,k)}}{n^2} |G_T(e_i)| |G_H(e_j)|\\
&=\sum_{k\in G[e_i,e_j]} \frac{(-1)^{d(e_i,e_j)}}{n^2} |G_T(e_i)| |G_T(e_j)|
+\sum_{k\in G[e_i,e_j)} \frac{-(-1)^{d(e_i,e_j)}}{n^2} |G_H(e_i)| |G_T(e_j)|\\
& \hspace{155pt} +\sum_{k\in G(e_i,e_j]} \frac{-(-1)^{d(e_i,e_j)}}{n^2} |G_T(e_i)| |G_H(e_j)| \hspace{12pt} (\text{by Lemma } \ref{parity G[e_i,e_j]})\\
&= \frac{(-1)^{d(e_i,e_j)}}{n^2} \Big( \big(\cancel{|G_H(e_i)|}-|G_T(e_j)| \big) |G_T(e_i)| |G_T(e_j)|
-\cancel{|G_T(e_i)| |G_H(e_i)| |G_T(e_j)|} \\
&  \hspace{278pt} -|G_T(e_j)| |G_T(e_i)| |G_H(e_j)| \Big) \\
&= \frac{(-1)^{d(e_i,e_j)}}{n^2} \Big( -|G_T(e_j)| |G_T(e_i)| |G_T(e_j)| -|G_T(e_j)| |G_T(e_i)| |G_H(e_j)|  \Big)\\
&= -\frac{(-1)^{d(e_i,e_j)}}{n^2} |G_T(e_i)||G_T(e_j)| \Big(  |G_T(e_j)|  +|G_H(e_j)| \Big)\\
&= -\frac{(-1)^{d(e_i,e_j)}}{n} |G_T(e_i)||G_T(e_j)| \hspace{12pt} (\text{since } |G_T(e_j)|  +|G_H(e_j)|=n)\\
&= -\frac{(-1)^{d(e_i,e_j)}}{n} |G[e_i,e_j)| |G(e_i,e_j]|.\\
\end{align*}

Case 2. $(e_i,e_j)\in G_T(e_j)\times G_T(e_i)$\\
This case is equivalent to having $G[e_i,e_j)=G_H(e_i)$ and $G(e_i,e_j]=G_H(e_j)$. By a proof similar to that in Case 1, we get 
$$s^+_{i,j}=-\frac{(-1)^{d(e_i,e_j)}}{n} |G_H(e_i)||G_H(e_j)|= -\frac{(-1)^{d(e_i,e_j)}}{n} |G[e_i,e_j)| |G(e_i,e_j]|.$$

Case 3. $(e_i,e_j)\in G_H(e_j)\times G_T(e_i)$\\
This case is equivalent to having $G[e_i,e_j)=G_H(e_i)$ and $G(e_i,e_j]=G_T(e_j)$. Note that $|G[e_i,e_j]|=n-|G_H(e_i)|-|G_T(e_j)|=|G_T(e_i)|-|G_T(e_j)|$. 

\begin{align*}
s^+_{i,j}&=\sum_{k\in G[e_i,e_j]} h_{i,k}h_{j,k} 
+\sum_{k\in G[e_i,e_j)} h_{i,k}h_{j,k}
+\sum_{k\in G(e_i,e_j]} h_{i,k}h_{j,k}\\
&=\sum_{k\in G[e_i,e_j]} \frac{(-1)^{d(e_i,k)+d(e_j,k)}}{n^2} |G_H(e_i)| |G_T(e_j)|
+\sum_{k\in G[e_i,e_j)} \frac{(-1)^{d(e_i,k)+d(e_j,k)}}{n^2} |G_T(e_i)| |G_T(e_j)|\\
& \hspace{155pt} +\sum_{k\in G(e_i,e_j]} \frac{(-1)^{d(e_i,k)+d(e_j,k)}}{n^2} |G_H(e_i)| |G_H(e_j)|\\
&=\sum_{k\in G[e_i,e_j]} \frac{(-1)^{d(e_i,e_j)}}{n^2} |G_H(e_i)| |G_T(e_j)|
+\sum_{k\in G[e_i,e_j)} \frac{-(-1)^{d(e_i,e_j)}}{n^2} |G_T(e_i)| |G_T(e_j)|\\
& \hspace{155pt} +\sum_{k\in G(e_i,e_j]} \frac{-(-1)^{d(e_i,e_j)}}{n^2} |G_H(e_i)| |G_H(e_j)| \hspace{12pt} (\text{by Lemma } \ref{parity G[e_i,e_j]})\\
&=\frac{(-1)^{d(e_i,e_j)}}{n^2} \Big( \big(\cancel{|G_T(e_i)|}-|G_T(e_j)| \big) |G_H(e_i)| |G_T(e_j)| - \cancel{|G_H(e_i)| |G_T(e_i)| |G_T(e_j)|} \\
&\hspace{278pt}  - |G_T(e_j)| |G_H(e_i)| |G_H(e_j)| \Big)\\
&=\frac{(-1)^{d(e_i,e_j)}}{n^2} \Big( -|G_T(e_j)| |G_H(e_i)| |G_T(e_j)| - |G_T(e_j)| |G_H(e_i)| |G_H(e_j)| \Big) \\
&=-\frac{(-1)^{d(e_i,e_j)}}{n^2}|G_H(e_i)| |G_T(e_j)|  \Big( |G_T(e_j)|  + |G_H(e_j)| \Big) \\
&=-\frac{(-1)^{d(e_i,e_j)}}{n}|G_H(e_i)| |G_T(e_j)| \hspace{12pt} (\text{since } |G_T(e_j)|  +|G_H(e_j)|=n)\\
&= -\frac{(-1)^{d(e_i,e_j)}}{n} |G[e_i,e_j)| |G(e_i,e_j]|.
\end{align*}

Case 4. $(e_i,e_j)\in G_T(e_j)\times G_H(e_i)$\\
This case is equivalent to having $G[e_i,e_j)=G_T(e_i)$ and $G(e_i,e_j]=G_H(e_j)$. By a proof similar to that in Case 3, we get 
$$s^+_{i,j}=-\frac{(-1)^{d(e_i,e_j)}}{n}|G_T(e_i)| |G_H(e_j)|
= -\frac{(-1)^{d(e_i,e_j)}}{n} |G[e_i,e_j)| |G(e_i,e_j]|.$$
\end{proof}

\begin{example}
For the tree given in Figure \ref{fig:M tree}, 
$$S= \left[ \begin{array}{rrrrrr}
    2&0&1&1&0&0\\
    0&2&1&0&0&1\\
    1&1&2&0&0&1\\
    1&0&0&2&0&0\\
    0&0&0&0&2&1\\
    0&1&1&0&1&2\\
    \end{array} \right]
\text{ and }
S^+=\frac{1}{7} \left[ \begin{array}{rrrrrrr}
    10&2&-8&-5&-2&4\\
    2&6&-3&-1&1&-2\\
    -8&-3&12&4&3&-6\\
    -5&-1&4&6&1&-2\\
    -2&1&3&1&6&-5\\
    4&-2&-6&-2&-5&10\\
    \end{array} \right].$$
\end{example}

We end this section by the following property of the Moore-Penrose inverse $M^+$ of the incidence matrix $M$ of a connected graph which is similar to \cite[Theorem 1]{I}. 

\begin{theorem}\label{M+}
Let $G$ be a connected graph on $n$ vertices $1,2,\ldots,n$ with the incidence matrix $M$. 
\begin{enumerate}
    \item[(a)] If $G$ has an odd cycle, then $MM^+=I_n$. 
    
    \item[(b)] If $G$ has no odd cycles (i.e., $G$ is bipartite), then $$MM^+=I_n-\frac{1}{n}[(-1)^{d(i,j)}].$$
\end{enumerate}
\end{theorem}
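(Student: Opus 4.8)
The plan is to exploit the standard fact that, for any real matrix $M$, the product $MM^+$ is the orthogonal projector onto the column space $\operatorname{Col}(M)$: the Penrose axioms make $MM^+$ symmetric and idempotent, while $MM^+M=M$ forces $\operatorname{Col}(MM^+)=\operatorname{Col}(M)$. This reduces the whole theorem to identifying $\operatorname{Col}(M)$, or equivalently its orthogonal complement $\operatorname{Null}(M^T)$. I would note that $x\in\operatorname{Null}(M^T)$ precisely when $x_i+x_j=0$ for every edge $\{i,j\}$ of $G$, and then split into the two cases according to whether $G$ is bipartite.

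For part (a), assuming $G$ has an odd cycle $v_1v_2\cdots v_{2k+1}v_1$, I would read the relations $x_{v_t}+x_{v_{t+1}}=0$ around the cycle to get $x_{v_t}=(-1)^{t-1}x_{v_1}$; the closing edge then forces $2x_{v_1}=0$, so $x_{v_1}=0$, and propagating $x_j=-x_i$ along edges of the connected graph $G$ gives $x\equiv0$. Hence $\operatorname{Null}(M^T)=\{0\}$, so $\rank M=n$, $\operatorname{Col}(M)=\R^n$, and the projector is $I_n$.

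For part (b), with $G$ connected and bipartite with parts $A,B$, the same propagation shows every solution of $M^Tx=0$ is constant on $A$ and equal to its negative on $B$; thus $\operatorname{Null}(M^T)$ is the line spanned by the sign vector $v$ with $v_i=1$ on $A$ and $v_i=-1$ on $B$. Choosing vertex $1\in A$, I would write $v_i=(-1)^{d(1,i)}$, so that $v^Tv=n$ and $MM^+=I_n-\tfrac1n vv^T$ with $(vv^T)_{ij}=v_iv_j=(-1)^{d(1,i)+d(1,j)}$.

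The step I expect to require the most care is the combinatorial identity $(-1)^{d(1,i)+d(1,j)}=(-1)^{d(i,j)}$ that converts this algebraic answer into the stated distance formula. I would justify it through the bipartite parity principle: vertices $i$ and $j$ lie in the same part exactly when $d(i,j)$ is even, and also exactly when $d(1,i)$ and $d(1,j)$ share parity, i.e. when $d(1,i)+d(1,j)$ is even. Substituting $v_iv_j=(-1)^{d(i,j)}$ then yields $MM^+=I_n-\tfrac1n[(-1)^{d(i,j)}]$, completing part (b).
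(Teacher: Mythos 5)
Your proposal is correct. The skeleton is the same as the paper's: both arguments reduce to observing that the rows of $I_n-MM^+$ are orthogonal to the columns of $M$, i.e.\ lie in $\operatorname{Null}(M^T)=\{x : x_i+x_j=0 \text{ for every edge } \{i,j\}\}$, and part (a) is essentially identical (odd cycle forces $x=0$, connectivity propagates). Where you genuinely diverge is in part (b). The paper does not invoke the projector interpretation of $MM^+$; instead it writes the $i$th row of $I_n-MM^+$ as an unknown multiple $c_i$ of the sign vector, rules out $c_i=0$ via a rank count ($\rank M=n-1$ for a connected bipartite graph, so $MM^+\neq I_n$) together with symmetry of $MM^+$, and then pins down $c_i=\tfrac1n$ by comparing diagonal entries of $(I_n-MM^+)^2=I_n-MM^+$. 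You instead identify $MM^+$ outright as the orthogonal projector onto $\operatorname{Col}(M)=v^\perp$ with $v=[(-1)^{d(1,i)}]$ spanning the null space, which gives $I_n-\tfrac1n vv^T$ in one step and dispenses with the $c_i$ bookkeeping entirely; the only extra care needed is your closing parity identity $(-1)^{d(1,i)+d(1,j)}=(-1)^{d(i,j)}$, which you justify correctly, and the (one-line) check that the sign vector really does satisfy $v_i+v_j=0$ on every edge so that $\operatorname{Null}(M^T)$ is the full line and not just contained in it. Your route is shorter and arguably more transparent; the paper's has the minor virtue of never needing the projector characterization of $MM^+$ as an external fact, deriving everything from the four Penrose identities directly.
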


\begin{proof}
Since $MM^+M=M$, $(I_n-MM^+)M=O_{n,m}$. Then each row of $I_n-MM^+$ is orthogonal to each column of $M$. Suppose $x^T=[x_1,x_2,\ldots,x_n]$ is orthogonal to each column of $M$. Then $x_i+x_j=0$ for any edge $\{i,j\}$ in $G$. Therefore if there is a path between $i$ and $j$ of length $d$, then $x_i=(-1)^d x_j$.

(a) Suppose $G$ has an odd cycle $C$ and $v$ be a vertex on $C$. Let $k$ be another vertex on $C$. Then there is a path of odd length and a path of even length between $k$ and $v$. Then $x_v=-x_k$ and $x_v=x_k$ which implies $x_v=0$. Thus $x_v=0$ for any vertex $v$ on $C$. Since $G$ is connected, there is a path between a vertex $t$ not on $C$ and a vertex $v$ on $C$. Thus $x_t=\pm x_v=0$ for any vertex $t$ not on $C$. Thus  $x^T=0^T$ which implies $I_n-MM^+=O_n$, i.e., $MM^+=I_n$.

(b) Suppose $G$ has no odd cycles. Then $G$ is a connected bipartite graph and by Theorem \ref{eigenvalue 0}, $\rank(M)=\rank(MM^T)=\rank(Q)=n-1$. Also note that since $G$ has no odd cycles, lengths of the paths between two fixed vertices in $G$ are either all odd or all even. Then for a fixed vertex $i$ of $G$, $x_v=(-1)^{d(i,v)}x_i$ for any vertex $v$ of $G$. Thus $$[x_1,x_2,\ldots,x_n]=x_i[(-1)^{d(i,1)},(-1)^{d(i,2)},\ldots,(-1)^{d(i,n)}].$$
Consequently any row of $I_n-MM^+$ is a multiple of $[(-1)^{d(i,1)},(-1)^{d(i,2)},\ldots,(-1)^{d(i,n)}]$. 
 Suppose for each $i=1,2,\ldots,n$, $i$th row of  $I_n-MM^+$ is 
$$c_i[(-1)^{d(i,1)},(-1)^{d(i,2)},\ldots,(-1)^{d(1,n)}].$$
Note that if $c_1=c_2=\cdots=c_n=0$, then $MM^+=I_n$ and consequently $n=\rank(MM^+)\leq \rank(M)=n-1$, a contradiction. Thus $c_i\neq 0$ for some $i$ and by symmetry of $MM^+$, $c_i\neq 0$ for all $i=1,2,\ldots,n$. Now observe that  $(I_n-MM^+)^2=I_n-MM^+$ because $M^+MM^+=M^+$. Then the $(i,i)$-entry of $(I_n-MM^+)^2$, which is $c_i^2n$, is same as the $(i,i)$-entry of $I_n-MM^+$, which is $c_i(-1)^{d(i,i)}=c_i$. Thus $c_i=\frac{1}{n}$ and the $(i,j)$-entry of $I_n-MM^+$ is $\frac{(-1)^{d(i,j)}}{n}$.
\end{proof}

\begin{corollary}
Let $G$ be a connected bipartite graph on $n$ vertices and $m$ edges with the incidence matrix $M$.  Then 
$$M^+[(-1)^{d(i,j)}]=O_{m,n} \text{ and } [(-1)^{d(i,j)}]^2=n[(-1)^{d(i,j)}].$$
\end{corollary}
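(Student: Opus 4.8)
The plan is to read off both identities directly from Theorem \ref{M+}(b) together with the defining equations of the Moore--Penrose inverse, rather than arguing entrywise. Write $B=[(-1)^{d(i,j)}]$, so that Theorem \ref{M+}(b) reads $MM^+=I_n-\frac{1}{n}B$. Everything will then reduce to substituting this closed form into two algebraic facts about $M^+$.

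First I would establish $M^+B=O_{m,n}$. I would start from the Moore--Penrose condition $M^+MM^+=M^+$, rewritten as $M^+(MM^+)=M^+$. Substituting $MM^+=I_n-\frac{1}{n}B$ gives $M^+-\frac{1}{n}M^+B=M^+$, and cancelling $M^+$ yields $\frac{1}{n}M^+B=O_{m,n}$, hence $M^+B=O_{m,n}$, which is the first claimed identity.

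For the second identity I would use that $MM^+$ is idempotent: indeed $(MM^+)(MM^+)=M(M^+MM^+)=MM^+$, so $I_n-MM^+$ is idempotent as well, giving $(I_n-MM^+)^2=I_n-MM^+$. Since $I_n-MM^+=\frac{1}{n}B$, this says $\frac{1}{n^2}B^2=\frac{1}{n}B$, i.e. $B^2=nB$, which is exactly $[(-1)^{d(i,j)}]^2=n[(-1)^{d(i,j)}]$.

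There is no genuine obstacle here; the only thing to get right is to feed the closed form $MM^+=I_n-\frac{1}{n}B$ of Theorem \ref{M+}(b) into the Moore--Penrose relation $M^+MM^+=M^+$ and into the idempotency of $MM^+$, rather than attempting a direct combinatorial summation over vertices. As an alternative for the first identity one could instead obtain $BM=O_{n,m}$ from $(I_n-MM^+)M=O_{n,m}$ (which appears in the proof of Theorem \ref{M+}) and transpose using the symmetry $d(i,j)=d(j,i)$, but the route through the Moore--Penrose equations above is shorter and avoids needing $M^TB=O$ separately.
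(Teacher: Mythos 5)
Your proof is correct and follows essentially the same route as the paper: both identities are read off from Theorem \ref{M+}(b) combined with the Moore--Penrose relation $M^+MM^+=M^+$. The only (cosmetic) difference is in the second identity, where the paper left-multiplies the already-established $M^+[(-1)^{d(i,j)}]=O_{m,n}$ by $M$ to get $\bigl(I_n-\frac{1}{n}B\bigr)B=O_n$, while you invoke the idempotency of $I_n-MM^+$ directly; both are equivalent one-line computations.
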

\begin{proof}
By Theorem \ref{M+}, $MM^+=I_n-\frac{1}{n}[(-1)^{d(i,j)}].$ Since $M^+=M^+MM^+$, we have
$$M^+=M^+MM^+=M^+\left(I_n-\frac{1}{n}[(-1)^{d(i,j)}]\right)=M^+-\frac{1}{n} M^+[(-1)^{d(i,j)}]$$
which implies 
$$M^+[(-1)^{d(i,j)}]=O_{m,n}.$$

Since $M^+[(-1)^{d(i,j)}]=O_{m,n}$, we have
$$MM^+[(-1)^{d(i,j)}]=O_n \implies \left(I_n-\frac{1}{n}[(-1)^{d(i,j)}]\right)[(-1)^{d(i,j)}]=O_n.$$
Thus we get $[(-1)^{d(i,j)}]^2=n[(-1)^{d(i,j)}]$.
\end{proof}

\section{Inverse of the Signless Laplacian and Edge-Laplacian of Odd Unicyclic Graphs}
A {\it unicyclic graph} on $n$ vertices is a simple connected graph that has $n$ edges. For a unicyclic graph $G$ with the cycle $C$ and an edge $e$ not in $C$, $G\setminus e$ has two connected components. The connected component of $G\setminus e$ that contains $C$ is denoted by $G\setminus e[C]$. Similarly the connected component of $G\setminus e$ that does not contain $C$ is denoted by $G\setminus e(C)$. When $e$ is on $C$, $G\setminus e[C]$ and $G\setminus e(C)$ are defined to be $G\setminus e$ and the empty graph respectively.  The unique shortest path between a vertex $i$ and $C$ is denoted by $P_{i-C}$.

\begin{theorem}
Let $G$ be an odd unicyclic graph on $n$ vertices $1,2,\ldots,n$ and edges $e_1,e_2,\ldots,e_n$ with the cycle $C$ and the incidence matrix $M$. Then $M$ is invertible and its inverse $M^{-1}=[a_{i,j}]$ is given by
\begin{equation}\label{unicyclic M inverse}
a_{i,j}= \begin{cases} 
\frac{(-1)^{d(e_i,j)}}{2}  & \text{ if } e_i \in C\\
0 & \text{ if } e_i \not \in C \text{ and } j \in G\setminus e_i [C]\\
(-1)^{d(e_i,j)}  & \text{ if } e_i \not \in C \text{ and } j \not \in G\setminus e_i [C].
\end{cases} 
\end{equation}
\end{theorem}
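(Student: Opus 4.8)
The plan is to verify directly that the proposed matrix $A=[a_{i,j}]$ is a left inverse of $M$; since $M$ is a square $n\times n$ matrix, establishing $AM=I_n$ immediately gives that $M$ is invertible with $M^{-1}=A$. (Invertibility can also be read off from Theorem \ref{M+}(a): since $G$ contains an odd cycle, $MM^+=I_n$, and a square matrix with a right inverse is invertible.) Thus the entire argument reduces to computing, for all $i,j$,
$$(AM)_{i,j}=\sum_{k=1}^{n} a_{i,k}\,m_{k,j}=a_{i,r}+a_{i,s},\qquad e_j=\{r,s\},$$
and showing this equals $1$ when $i=j$ and $0$ otherwise, in other words that summing the $i$th row of $A$ over the two endpoints of any edge $e_j$ reproduces $\delta_{i,j}$.

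First I would dispose of the diagonal entries. If $e_i\in C$, then both endpoints of $e_i$ are at distance $0$ from $e_i$, so $(AM)_{i,i}=\tfrac12+\tfrac12=1$. If $e_i\notin C$, then $e_i$ is a bridge and its two endpoints lie on opposite sides of the cut $G\setminus e_i$: the endpoint in $G\setminus e_i[C]$ contributes $0$ and the endpoint in $G\setminus e_i(C)$ contributes $(-1)^{0}=1$, so again $(AM)_{i,i}=1$.

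The off-diagonal case $i\neq j$ splits according to whether $e_i$ lies on the cycle. When $e_i\notin C$, the edge $e_i$ is a bridge, so the two endpoints $r,s$ of $e_j$ lie in the same component of $G\setminus e_i$. If they lie in $G\setminus e_i[C]$, both entries of $A$ are $0$ and we are done; if they lie in $G\setminus e_i(C)$, which is a tree, then distance to $e_i$ is measured from the single endpoint of $e_i$ on that side, so $d(e_i,r)$ and $d(e_i,s)$ differ by exactly $1$ and $(-1)^{d(e_i,r)}+(-1)^{d(e_i,s)}=0$. The remaining and most delicate case is $e_i\in C$ with $i\neq j$, where I would show that $d(e_i,r)$ and $d(e_i,s)$ always have opposite parity, so that $(AM)_{i,j}=\tfrac12\big((-1)^{d(e_i,r)}+(-1)^{d(e_i,s)}\big)=0$.

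The main obstacle is exactly this parity statement for $e_i\in C$, and it is where the oddness of the cycle is essential. The clean way to phrase it is to regard $g(v):=(-1)^{d(e_i,v)}$ as a $\pm1$-labelling of the vertices and to prove that every edge other than $e_i$ has endpoints of opposite label while $e_i$ itself has both endpoints labelled $+1$. For a vertex in a tree hanging off $C$, the shortest path to $e_i$ runs through the attachment vertex, so $g$ flips across every tree edge just as in the bipartite case. Along the cycle, walking away from $e_i$ in either direction the edge-distance increases by $1$ at each step up to the unique antipodal vertex and then decreases by $1$; the antipode is a \emph{single} vertex precisely because $C$ has odd length, so consecutive cycle vertices always receive opposite labels and the only edge whose endpoints share a label is forced to be $e_i$. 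For an even cycle the antipodal edge would have both endpoints equidistant from $e_i$, and the argument (and indeed invertibility of $M$) would break down. Granting this labelling property, all subcases with $e_i\in C$ follow, and assembling every case yields $AM=I_n$, hence $M^{-1}=A$.
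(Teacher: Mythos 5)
Your proof is correct, but it runs in the opposite direction from the paper's. The paper first invokes Theorem \ref{eigenvalue 0} to get $\rank(M)=\rank(MM^T)=n$, and then verifies $MA=I_n$: each entry $(MA)_{i,j}$ is a sum of $a_{k,j}$ over all edges $e_k$ incident at vertex $i$, so the case analysis is organized by whether the \emph{vertices} $i$ and $j$ lie on $C$ and whether $i$ lies on $P_{j-C}$, and some of these sums have many terms. You instead verify $AM=I_n$, where each entry is the two-term sum $a_{i,r}+a_{i,s}$ over the endpoints of $e_j$, with cases organized by whether the \emph{edges} $e_i,e_j$ lie on $C$; since $M$ is square, a one-sided inverse suffices, so the preliminary rank argument is optional in either treatment. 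The heart of your version is the single clean lemma that $v\mapsto(-1)^{d(e_i,v)}$ flips sign across every edge except $e_i$ itself when $e_i\in C$, which isolates exactly where oddness of the cycle is used (an even cycle would have an antipodal edge with equidistant endpoints, breaking both the lemma and invertibility); this is tidier and more transparent than the paper's subcase analysis. What the paper's vertex-based computation buys is structural consistency with the rest of Section 3, where the entries of $Q^{-1}$ and $S^{-1}$ are again computed as sums indexed the same way and supported by the analogous parity lemmas (Lemmas \ref{dist in cycle} and \ref{parity of e_i,e_j}). Both arguments are complete and correct.
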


\begin{proof}
By Theorem \ref{eigenvalue 0}, $Q=MM^T$ is invertible and $Q$ has full rank. Since $\rank(M)=\rank(MM^T)$, $M$ has full rank and therefore $M$ is invertible. Consider $A=[a_{i,j}]$ as defined in (\ref{unicyclic M inverse}). Since $M=[m_{i,j}]$ is a square matrix, it suffices to show that $MA=I_{n}$. The $(i,j)$-entry of $MA$ is
$$(MA)_{i,j}=\sum_{k:e_k\in E(G)} m_{i,k} a_{k,j}=\sum_{k:e_k\in E(i)} a_{k,j}.$$

For $i=j \notin C$, 
$$(MA)_{i,j}=\sum_{k:e_k\in E(i)} a_{k,j}
=\sum_{k:e_k\in E(i),\; i\notin G\setminus e_i [C]} 1+ \sum_{k:e_k\in E(i),\; i\in G\setminus e_i [C]} 0=1.$$

For $i=j \in C$,
$$(MA)_{i,j}=\sum_{k:e_k\in E(i)} a_{k,j}
%=\sum_{k:e_k\in E(i)\cap C} a_{k,j}+ \sum_{k:e_k\in E(i)\setminus C} a_{k,j}
=\sum_{k:e_k\in E(i)\cap E(C)} \frac{1}{2}+ \sum_{k:e_k\in E(i)\setminus E(C)} 0=2\cdot \frac{1}{2}=1.$$

\noindent Suppose $i\neq j$.\\

Case 1. $i,j\notin C$
$$(MA)_{i,j}=\sum_{k:e_k\in E(i)} a_{k,j}
=\sum_{\substack{k:e_k \in E(i)\setminus E(C) \\ j \in G\setminus{e_k}[C]}} 0 + \sum_{\substack{k:e_k \in E(i)\setminus E(C) \\ j \not \in G\setminus{e_k}[C]}} (-1)^{d(e_k,j)}.$$
There are two edges, say $e_{k_1}$ and $e_{k_2}$, for which $e_k \in E(i)\setminus E(C) $ and $j \not \in G\setminus{e_k}[C]$. Note that $i\in P_{j-C}$ and $e_{k_1}$ and $e_{k_2}$ are on $P_{j-C}$. Thus $d(e_{k_1},j)$ and $d(e_{k_2},j)$ have different parity and consequently $(MA)_{i,j}=(-1)^{d(e_{k_1},j)} +(-1)^{d(e_{k_2},j)} =0$.\\

Case 2. $i,j\in C$
$$(MA)_{i,j}=\sum_{k:e_k\in E(i)} a_{k,j}
=\sum_{\substack{k:e_k \in E(i)\setminus E(C) \\ j \in G\setminus{e_k}[C]}} 0 + \sum_{k:e_k \in E(i)\cap E(C)} \frac{(-1)^{d(e_k,j)}}{2}.$$
There are two edges, say $e_{k_1}$ and $e_{k_2}$, that are in $E(i)\cap E(C)$. Note that $d(e_{k_1},j)$ and $d(e_{k_2},j)$ have different parity and consequently $(MA)_{i,j}=\frac{(-1)^{d(e_{k_1},j)}}{2} +\frac{(-1)^{d(e_{k_2},j)}}{2} =0$.\\

Case 3. $i\in C$ and $j\notin C$.
\begin{align*}
(MA)_{i,j}&=\sum_{\substack{k:e_k \in E(i)\setminus E(C) \\ j \in G\setminus{e_k}[C]}} 0 
+ \sum_{\substack{k:e_k \in E(i)\setminus E(C) \\ j \notin G\setminus{e_k}[C]}} (-1)^{d(e_k,j)}
+\sum_{k:e_k\in E(i)\cap E(C)} \frac{(-1)^{d(e_k,j)}}{2}\\
&=\sum_{\substack{k:e_k \in E(i)\setminus E(C) \\ j \notin G\setminus{e_k}[C]}} (-1)^{d(e_k,j)}
+\sum_{k:e_k\in E(i)\cap E(C)} \frac{(-1)^{d(e_k,j)}}{2}.
\end{align*}
There are two edges, say $e_{k_1}$ and $e_{k_2}$, that are in $E(i)\cap E(C)$. Now either $i\in P_{j-C}$ or $i\notin P_{j-C}$.
First suppose $i\in P_{j-C}$. Then there is a unique edge, say $e_t$, in $E(i)\setminus E(C)$ such that $j \notin G\setminus{e_k}[C]$. Note that $d(e_t,j)=d(i,j)-1$ and $d(e_{k_1},j)=d(e_{k_2},j)=d(i,j)$. Then
$$(MA)_{i,j}=(-1)^{d(e_t,j)}+\frac{(-1)^{d(e_{k_1},j)}}{2}+\frac{(-1)^{d(e_{k_2},j)}}{2}
=(-1)^{d(i,j)-1}+\frac{(-1)^{d(i,j)}}{2}+\frac{(-1)^{d(i,j)}}{2}=0.$$

Now suppose $i\notin P_{j-C}$. First observe that there is no edge in $e_k \in E(i)\setminus E(C)$ for which $j \notin G\setminus{e_k}[C]$. We have either $d(e_{k_1},j)=1+d(e_{k_2},j)$ or $d(e_{k_2},j)=1+d(e_{k_1},j)$. Thus
$$(MA)_{i,j}=\frac{(-1)^{d(e_{k_1},j)}}{2}+\frac{(-1)^{d(e_{k_2},j)}}{2}=0.$$

Case 4. $i\notin C$ and $j\in C$\\
This case has a proof similar to that of Case 3.
\end{proof}

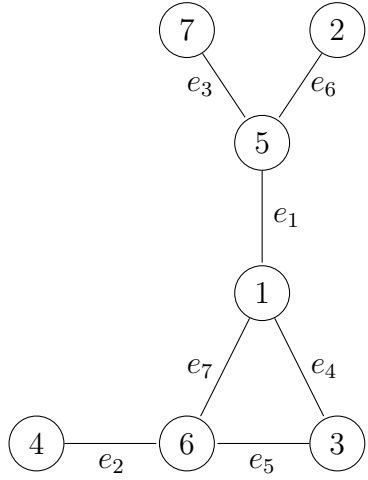
\begin{figure}
        \centering
        \begin{tikzpicture}[shorten > = 1pt, auto, node distance = .5cm ]
\tikzset{vertex/.style = {shape = circle, draw, minimum size = 1em}}
\tikzset{edge/.style = {-}}
%vertices
\node[vertex] (1) at (-1,4.5){$7$};
\node[vertex] (2) at (1,4.5){$2$};
\node[vertex] (3) at (0,3){$5$};
\node[vertex] (4) at (0,1){$1$};
\node[vertex] (5) at (1,-1){$3$};
\node[vertex] (6) at (-1,-1){$6$};
\node[vertex] (7) at (-3,-1){$4$};
%edges
\draw[edge] (1) edge node[left]{$e_3$} (3);
\draw[edge] (2) edge node[right]{$e_6$} (3);
\draw[edge] (3) edge node[right]{$e_1$} (4);
\draw[edge] (4) edge node[right]{$e_4$} (5);
\draw[edge] (5) edge node[below]{$e_5$} (6);
\draw[edge] (7) edge node[below]{$e_2$} (6);
\draw[edge] (4) edge node[left]{$e_7$} (6);
\end{tikzpicture}
        \caption{An odd unicyclic graph}
        \label{fig:odd unicyclic}
    \end{figure}

\begin{example}
For the odd unicyclic graph given in Figure \ref{fig:odd unicyclic}, 
$$M= \left[\begin{array}{rrrrrrr}
1 & 0 & 0 & 1 & 0 & 0 & 1 \\
0 & 0 & 0 & 0 & 0 & 1 & 0 \\
0 & 0 & 0 & 1 & 1 & 0 & 0 \\
0 & 1 & 0 & 0 & 0 & 0 & 0 \\
1 & 0 & 1 & 0 & 0 & 1 & 0 \\
0 & 1 & 0 & 0 & 1 & 0 & 1 \\
0 & 0 & 1 & 0 & 0 & 0 & 0
\end{array}\right] 
\text{ and }
M^+= \left[\begin{array}{rrrrrrr}
0 & -1 & 0 & 0 & 1 & 0 & -1 \\
0 & 0 & 0 & 1 & 0 & 0 & 0 \\
0 & 0 & 0 & 0 & 0 & 0 & 1 \\
\frac{1}{2} & \frac{1}{2} & \frac{1}{2} & \frac{1}{2} &
-\frac{1}{2} & -\frac{1}{2} & \frac{1}{2} \\
-\frac{1}{2} & -\frac{1}{2} & \frac{1}{2} & -\frac{1}{2}
& \frac{1}{2} & \frac{1}{2} & -\frac{1}{2} \\
0 & 1 & 0 & 0 & 0 & 0 & 0 \\
\frac{1}{2} & \frac{1}{2} & -\frac{1}{2} & -\frac{1}{2}
& -\frac{1}{2} & \frac{1}{2} & \frac{1}{2}
\end{array}\right].$$
\end{example}

For a unicyclic graph with the cycle $C$,  the distance $d(i,C)$ between $i$ and $C$ is defined by $d(i,C):=|P_{i-C}|-1$. We write $i^*$ for the vertex on $C$ that is closest to $i$. Then $i^*=i$ when $i$ is on $C$. Also $P_{i-C}=P_{i-i^*}$. 

\begin{example}
Consider the odd unicyclic graph given in Figure \ref{fig:odd unicyclic}. Note that  $7^*=1$, $P_{7-C}=P_{7-7^*}=P_{7-1}$, and $d(7,C)=|P_{7-C}|-1=2$. 
\end{example}

\begin{lemma}\label{dist in cycle}
Let $G$ be an odd unicyclic graph on $n$ vertices $1,2,\ldots,n$ and edges $e_1,e_2,\ldots,e_n$ with the cycle $C$. Then $e_k\notin C$ and distinct $i,j\notin G\setminus e_k [C]$ if and only if $e_k\in P_{i-i^*}\cap P_{j-j^*}$. Moreover,
\begin{enumerate}
    \item[(a)] if $e_k\in C$ and $e_k\in P_{i-j}$, then $d(e_k,i)+d(e_k,j)$ and $d(i,j)$ have different parity,
    \item[(b)] if $e_k\in C$ and $e_k\notin P_{i-j}$, then $d(e_k,i)+d(e_k,j)$ and $d(i,j)$ have the same parity,
    \item[(c)] if $e_k\in P_{i-i^*}\cap P_{j-j^*}$, then $d(e_k,i)+d(e_k,j)$ and $d(i,j)$ have the same parity.
\end{enumerate}
\end{lemma}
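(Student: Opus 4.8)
The plan is to peel the biconditional off from the three parity statements, and to run every parity computation through an auxiliary two-colouring of a suitable bipartite subgraph.

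\textbf{The biconditional.} First I would record the structural fact that the unique path $P_{i-i^*}$ from a vertex $i$ to its nearest cycle vertex $i^*$ traverses \emph{no} edge of $C$: it meets $C$ only at its terminal vertex $i^*$, so any edge lying on $P_{i-i^*}$ is automatically a non-cycle edge. For the forward direction, if $e_k\notin C$ and $i,j\notin G\setminus e_k[C]$, then $i,j$ lie in the branch $G\setminus e_k(C)$ while $i^*,j^*$ lie on $C\subseteq G\setminus e_k[C]$; since $e_k$ is the only edge joining these two components, every $i$--$i^*$ and $j$--$j^*$ path must cross $e_k$, giving $e_k\in P_{i-i^*}\cap P_{j-j^*}$. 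Conversely, if $e_k\in P_{i-i^*}$ then $e_k\notin C$ by the recorded fact, and deleting $e_k$ leaves $i$ on the side away from $C$, i.e. $i\notin G\setminus e_k[C]$; the same argument applies to $j$.

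\textbf{Parts (a) and (b).} Here $e_k=\{l_k,m_k\}\in C$, so $T_k:=G\setminus e_k$ is a spanning \emph{tree}; let $c:V(G)\to\{0,1\}$ be its proper two-colouring. Since $l_k$ and $m_k$ are joined in $T_k$ by the arc $C\setminus e_k$, whose length is even ($C$ being odd), they receive a common colour $\gamma:=c(l_k)=c(m_k)$. The engine is the parity identity $d_G(x,y)\equiv c(x)+c(y)+\varepsilon(x,y)\pmod 2$, where $\varepsilon(x,y)=1$ exactly when a shortest $x$--$y$ path uses $e_k$: a path avoiding $e_k$ lives in $T_k$ and has length $\equiv c(x)+c(y)$, while a path using $e_k$ once has length $\equiv c(x)+c(l_k)+1+c(m_k)+c(y)\equiv c(x)+c(y)+1$, and as these parities differ the two path-types never tie. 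I would then observe that the shortest path from $i$ to the nearer endpoint of $e_k$ cannot use $e_k$ (otherwise $e_k$ is its last edge, forcing the other endpoint strictly closer), so $d(e_k,i)\equiv c(i)+\gamma$ and likewise $d(e_k,j)\equiv c(j)+\gamma$, whence $d(e_k,i)+d(e_k,j)\equiv c(i)+c(j)$. Comparing with $d(i,j)\equiv c(i)+c(j)+\varepsilon(i,j)$ and noting $\varepsilon(i,j)=1\iff e_k\in P_{i-j}$ yields precisely (a) (different parity) and (b) (same parity).

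\textbf{Part (c).} By the biconditional $e_k\notin C$ is a bridge and $i,j$ both lie in the branch tree $B:=G\setminus e_k(C)$; let $w$ be the endpoint of $e_k$ lying in $B$. Crossing the bridge shortens no route between vertices of $B$, so $d_G(i,j)$, $d(e_k,i)=d_B(w,i)$ and $d(e_k,j)=d_B(w,j)$ are all realised inside the bipartite tree $B$; its two-colouring $c'$ then gives $d(e_k,i)+d(e_k,j)\equiv c'(i)+c'(j)\equiv d_G(i,j)$, i.e. the same parity.

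\textbf{Main obstacle.} The biconditional and (c) are routine; the delicate point is (a)/(b), and within it the single clean claim that $d(e_k,\cdot)$ depends only on the colour of its argument. This rests on two facts that must be nailed down carefully: that a shortest path to the nearer endpoint of $e_k$ avoids $e_k$, and that $l_k,m_k$ share the colour $\gamma$ so that the choice of nearer endpoint is parity-irrelevant. Once these are secured, the identification of the indicator $\varepsilon(i,j)$ with the condition $e_k\in P_{i-j}$ is immediate and (a), (b) fall out together.
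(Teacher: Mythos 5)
Your proposal is correct, but it proves the parity claims by a genuinely different mechanism than the paper. The paper argues each part by writing down explicit distance identities: for (a), $d(e_k,i)+d(e_k,j)=d(i,j)-1$; for (b), a split into two subcases according to whether $P_{i-e_k}\cap P_{j-e_k}$ is empty, the harder subcase yielding $d(e_k,i)+d(e_k,j)=d(i,j)+|C|-1-2d(i^*,j^*)$ so that oddness of $|C|$ enters through the term $|C|-1$; for (c), $d(e_k,i)+d(e_k,j)=2d(e_k,t)+d(i,j)$ with $t$ the vertex of $P_{i-j}$ nearest $e_k$. You instead pass to the spanning tree $T_k=G\setminus e_k$ (for $e_k\in C$) with its proper two-colouring, observe that oddness of $|C|$ forces the endpoints of $e_k$ to share a colour, and derive the single invariant $d_G(x,y)\equiv c(x)+c(y)+\varepsilon(x,y)\pmod 2$ with $\varepsilon$ the indicator that a shortest path crosses $e_k$; parts (a) and (b) then fall out of one computation rather than a case analysis, and (c) is the analogous tree-colouring argument in the branch component. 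Your route buys uniformity (no subcases in (b), and the well-definedness of ``the shortest path uses $e_k$'' comes for free from the parity gap) and isolates exactly where non-bipartiteness is used, at the cost of setting up the colouring machinery and of two small facts you correctly flag as needing care: that the shortest path to the nearer endpoint of $e_k$ avoids $e_k$, and that shortest paths realizing $d(e_k,\cdot)$ and $d_G(i,j)$ for $i,j$ in a branch stay inside that branch. The paper's route is more pedestrian but entirely self-contained arithmetic on distances. Both are valid; if you write yours up, state the walk-parity identity as a standalone claim (every non-$e_k$ edge flips the colour, $e_k$ preserves it) since it carries the whole weight of parts (a) and (b).
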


\begin{proof}
The first part being clear, we proceed to the rest.

\begin{enumerate}
    \item[(a)] Suppose $e_k\in C$ and $e_k\in P_{i-j}$. Then
$$d(e_k,i)+d(e_k,j)=d(i,j)-1.$$
Thus $d(e_k,i)+d(e_k,j)$ and $d(i,j)$ have different parity.

\item[(b)] Suppose $e_k\in C$ and $e_k\notin P_{i-j}$. First suppose $P_{i-e_k}\cap P_{j-e_k}\neq \varnothing$. Then $d(e_k,i)+d(e_k,j)$ is $2d(e_k,i^*)+d(i,j)$ or $2d(e_k,j^*)+d(i,j)$. Then $d(e_k,i)+d(e_k,j)$ and $d(i,j)$ have the same parity.

Now suppose $P_{i-e_k}\cap P_{j-e_k}= \varnothing$. Then $d(i,i^*)+d(j,j^*)=d(i,j)-d(i^*,j^*)$ and $d(e_k,i^*)+d(e_k,j^*)=|C|-1-d(i^*,j^*)$. Thus 
\begin{align*}d(e_k,i)+d(e_k,j)
&=d(i,i^*)+d(i^*,e_k)+d(j,j^*)+d(j^*,e_k)\\
&=\big(d(i,i^*)+d(j,j^*)\big) +\big( d(i^*,e_k)+d(j^*,e_k)\big) & \\
&=d(i,j)-d(i^*,j^*)+ |C|-1-d(i^*,j^*)\\
&=d(i,j)+|C|-1-2d(i^*,j^*).\end{align*}
Thus $d(e_k,i)+d(e_k,j)$ and $d(i,j)$ have the same parity.

\item[(c)] Suppose $e_k\in P_{i-i^*}\cap P_{j-j^*}$. Let $t$ be the vertex on the path $P_{i-j}$ that is closest to $e_k$. Then 
$$d(e_k,i)+d(e_k,j)=2d(e_k,t)+d(i,j).$$
Thus $d(e_k,i)+d(e_k,j)$ and $d(i,j)$ have the same parity.
\end{enumerate}
\end{proof}

Using $M^{-1}$, we have the inverses of $Q$ and $S$ as follows:
$Q^{-1}=(M^{-1})^TM^{-1}$ and  $S^{-1}=M^{-1}(M^{-1})^T$. 

\begin{theorem}
Let $G$ be an odd unicyclic graph on $n$ vertices $1,2,\ldots,n$ with the cycle $C$ and the signless Laplacian matrix $Q$. Then the inverse $Q^{-1}=[q_{i,j}^+]$ of $Q$ is given by
$$q^+_{i,j}= \frac{(-1)^{d(i,j)}}{4}  \Big( |C|-2d(i^*,j^*)+ 4|E(P_{i-i^*}\cap P_{j-j^*})| \Big),$$
where $i^*$ and $j^*$ are the vertices on $C$  closest to $i$ and $j$ respectively.
\end{theorem}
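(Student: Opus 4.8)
The plan is to compute $Q^{-1}$ directly from the closed form of $M^{-1}=[a_{i,j}]$ given in (\ref{unicyclic M inverse}). Since $Q=MM^T$ is invertible, we have $Q^{-1}=(M^{-1})^TM^{-1}$, so the $(i,j)$-entry is
$$q^+_{i,j}=\sum_{k:e_k\in E(G)} a_{k,i}\,a_{k,j}.$$
First I would split this sum according to whether $e_k$ lies on the cycle $C$, since the formula (\ref{unicyclic M inverse}) for $a_{k,i}$ takes two different shapes in these cases. The goal is to show that the cycle edges produce exactly the $\frac{(-1)^{d(i,j)}}{4}\big(|C|-2d(i^*,j^*)\big)$ piece, while the non-cycle edges produce the $\frac{(-1)^{d(i,j)}}{4}\cdot 4\,|E(P_{i-i^*}\cap P_{j-j^*})|$ piece.

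For the non-cycle edges, $a_{k,i}a_{k,j}$ is nonzero precisely when both $i\notin G\setminus e_k[C]$ and $j\notin G\setminus e_k[C]$, and by the first part of Lemma \ref{dist in cycle} this happens exactly when $e_k\in P_{i-i^*}\cap P_{j-j^*}$; for each such edge $a_{k,i}a_{k,j}=(-1)^{d(e_k,i)+d(e_k,j)}$. Lemma \ref{dist in cycle}(c) then forces $(-1)^{d(e_k,i)+d(e_k,j)}=(-1)^{d(i,j)}$, so this part of the sum collapses to $(-1)^{d(i,j)}\,|E(P_{i-i^*}\cap P_{j-j^*})|$, as required.

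For the cycle edges, each $e_k\in C$ contributes $a_{k,i}a_{k,j}=\frac14(-1)^{d(e_k,i)+d(e_k,j)}$, and here I would invoke Lemma \ref{dist in cycle}(a)(b): the sign is $-(-1)^{d(i,j)}$ when $e_k$ lies on $P_{i-j}$ and $+(-1)^{d(i,j)}$ otherwise. The key counting step is that the edges of $C$ lying on $P_{i-j}$ are exactly those of the shorter $i^*$--$j^*$ arc, so there are $d(i^*,j^*)$ of them and $|C|-d(i^*,j^*)$ cycle edges off $P_{i-j}$. Granting this, the cycle contribution equals
$$\frac{(-1)^{d(i,j)}}{4}\Big( (|C|-d(i^*,j^*)) - d(i^*,j^*)\Big)=\frac{(-1)^{d(i,j)}}{4}\big(|C|-2d(i^*,j^*)\big),$$
and adding the two contributions gives the claimed formula.

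I expect the main obstacle to be justifying the counting claim that $P_{i-j}$ meets $C$ in exactly the $d(i^*,j^*)$ edges of the shorter $i^*$--$j^*$ arc. This rests on the unicyclic structure, namely that the unique path from $i$ (resp.\ $j$) to $C$ reaches $C$ at $i^*$ (resp.\ $j^*$), together with the oddness of $|C|$, which makes the two $i^*$--$j^*$ arcs have distinct lengths and hence singles out a unique shortest $i$--$j$ path using the shorter arc; when $i^*=j^*$ no cycle edge is used and $d(i^*,j^*)=0$. I would also note that the displayed formula is valid for $i=j$: then $d(i,j)=0$, $d(i^*,j^*)=0$, and $|E(P_{i-i^*}\cap P_{j-j^*})|=d(i,C)$, giving $q^+_{i,i}=\frac{|C|}{4}+d(i,C)$, which matches the direct evaluation $\sum_k a_{k,i}^2$.
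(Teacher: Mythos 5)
Your proposal is correct and follows essentially the same route as the paper: expand $q^+_{i,j}=\sum_k a_{k,i}a_{k,j}$, split the sum into cycle edges (further split by membership in $P_{i-j}$) and non-cycle edges (nonzero exactly on $P_{i-i^*}\cap P_{j-j^*}$), and apply Lemma \ref{dist in cycle} to fix the signs, with the counting step $|E(C)\cap E(P_{i-j})|=d(i^*,j^*)$ used implicitly in the paper as well. Your explicit remark on why the shorter $i^*$--$j^*$ arc is unique (oddness of $|C|$) and your verification of the $i=j$ case match the paper's treatment.
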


\begin{proof}
Since $M^{-1}=[a_{i,j}]$ and $Q^{-1}=(M^{-1})^TM^{-1}$, $q^+_{i,j}=\sum_{k:e_k\in E(G)} a_{k,i}a_{k,j}$. For $i=j$,
%$$q^+_{i,i}=\sum_{k:e_k\in E(G)} a_{k,i}^2.$$
\begin{align*}
q^+_{i,i}&=\sum_{k:e_k\in E(G)} a_{k,i}^2 \\ 
&=\sum_{k:e_k\in C} a_{k,i}^2
+\sum_{\substack{k:e_k\notin C \\ i\notin G\setminus e_k[C]}} a_{k,i}^2
+\sum_{\substack{k:e_k\notin C \\ i\in G\setminus e_k[C]}} a_{k,i}^2\\
&=\sum_{k:e_k\in C} a_{k,i}^2
+\sum_{\substack{k:e_k\notin C \\ i\notin G\setminus e_k[C]}} a_{k,i}^2.
\end{align*}

When $i\in C$,
$$q^+_{i,i}=\sum_{k:e_k\in C} a_{k,i}^2
+\sum_{\substack{k:e_k\notin C \\ i\notin G\setminus e_k[C]}} a_{k,i}^2
=\sum_{k:e_k\in C} \frac{1}{4}
=\frac{|C|}{4}.$$

When $i\notin C$,
$$q^+_{i,i}=\sum_{k:e_k\in C} a_{k,i}^2
+\sum_{\substack{k:e_k\notin C \\ i\notin G\setminus e_k[C]}} a_{k,i}^2
=\sum_{k:e_k\in C} \frac{1}{4}
+\sum_{k:e_k\in P_{i-C}} 1
=\frac{|C|}{4} +|E(P_{i-i^*})|.
$$

Since $d(i,i)=d(i^*,i^*)=0$ and $E(P_{i-i^*}\cap P_{i-i^*})=E(P_{i-i^*})$, we have 
$$q^+_{i,i}= \frac{(-1)^{d(i,i)}}{4}  \Big( |C|-2d(i^*,i^*)+ 4|E(P_{i-i^*}\cap P_{i-i^*})| \Big).$$

\noindent Suppose $i\neq j$.
\begin{align*}
q^+_{i,j}&=\sum_{k:e_k\in E(G)} a_{k,i}a_{k,j} \\ 
&=\sum_{\substack{k:e_k\in C \\ e_k\in P_{i-j}}} a_{k,i}a_{k,j}
+\sum_{\substack{k:e_k\in C \\ e_k\notin P_{i-j}}} a_{k,i}a_{k,j}
+\sum_{\substack{k:e_k\notin C\\ i,j\notin G\setminus e_i [C]}} a_{k,i}a_{k,j}\\
&=\sum_{\substack{k:e_k\in C \\ e_k\in P_{i-j}}} \frac{(-1)^{d(e_k,i)+d(e_k,j)}}{4}
+\sum_{\substack{k:e_k\in C \\ e_k\notin P_{i-j}}} \frac{(-1)^{d(e_k,i)+d(e_k,j)}}{4}
+\sum_{\substack{k:e_k\notin C\\ i,j\notin G\setminus e_i [C]}} (-1)^{d(e_k,i)+d(e_k,j)}\\
&=-\sum_{\substack{k:e_k\in C \\ e_k\in P_{i-j}}} \frac{(-1)^{d(i,j)}}{4}
+\sum_{\substack{k:e_k\in C \\ e_k\notin P_{i-j}}} \frac{(-1)^{d(i,j)}}{4}
+\sum_{k:e_k\in E(P_{i-i^*}\cap P_{j-j^*})} (-1)^{d(i,j)} \hspace{34pt}(\text{by Lemma }\ref{dist in cycle})\\
&=-|E(P_{i^*-j^*})|\frac{(-1)^{d(i,j)}}{4}
+|E(C\setminus P_{i^*-j^*})|\frac{(-1)^{d(i,j)}}{4}
+|E(P_{i-i^*}\cap P_{j-j^*}))| (-1)^{d(i,j)} \\
&=(-1)^{d(i,j)}  
\left( -\frac{d(i^*,j^*)}{4}+\frac{|C|-d(i^*,j^*)}{4}+|E(P_{i-i^*}\cap P_{j-j^*}))|   \right)\\
&= \frac{(-1)^{d(i,j)}}{4}  \Big( |C|-2d(i^*,j^*)+ 4|E(P_{i-i^*}\cap P_{j-j^*})| \Big).
\end{align*}

\end{proof}

\begin{corollary}
Let $G$ be an odd unicyclic graph on $n$ vertices $1,2,\ldots,n$ with the cycle $C$. Suppose $Q$ is the signless Laplacian matrix of $G$ and its inverse is $Q^{-1}=[q_{i,j}^+]$. Then vertex $i$ is in $C$ if and only if $q_{i,i}^+=\frac{|C|}{4}$.
\end{corollary}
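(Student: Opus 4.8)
The plan is to use the formula for the diagonal entries $q_{i,i}^+$ derived in the preceding theorem, which already splits naturally into the two cases $i \in C$ and $i \notin C$. The corollary is simply the observation that these two cases are distinguished by the value $\frac{|C|}{4}$, so the proof amounts to verifying both directions of the biconditional.

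First I would recall from the proof of the preceding theorem that
$$q^+_{i,i}=\frac{|C|}{4} \text{ when } i\in C, \quad\text{and}\quad q^+_{i,i}=\frac{|C|}{4}+|E(P_{i-i^*})| \text{ when } i\notin C.$$
For the forward direction, if $i \in C$ then by the first formula $q_{i,i}^+ = \frac{|C|}{4}$ immediately, giving one implication. For the converse, I would argue the contrapositive: suppose $i \notin C$. Then $i^* \neq i$, so the path $P_{i-i^*}$ contains at least one edge, whence $|E(P_{i-i^*})| \geq 1$. Consequently $q_{i,i}^+ = \frac{|C|}{4} + |E(P_{i-i^*})| \geq \frac{|C|}{4} + 1 > \frac{|C|}{4}$, so $q_{i,i}^+ \neq \frac{|C|}{4}$. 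This establishes that $q_{i,i}^+ = \frac{|C|}{4}$ forces $i \in C$, completing the equivalence.

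I do not anticipate a genuine obstacle here, since everything follows from the diagonal formula already established. The only point requiring a moment's care is the strict inequality: one must confirm that $|E(P_{i-i^*})|$ is a strictly positive integer when $i \notin C$, which is clear because a vertex off the cycle is at distance at least one from its closest cycle vertex $i^*$, so the shortest path $P_{i-i^*} = P_{i-C}$ has at least one edge and contributes a positive integer to $q_{i,i}^+$. Since this added quantity is an integer while $\frac{|C|}{4}$ is fixed, the two values can never coincide when $i\notin C$, and the characterization is sharp.
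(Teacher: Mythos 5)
Your proof is correct and follows exactly the route the paper intends: the corollary is stated without proof as an immediate consequence of the diagonal formulas $q^+_{i,i}=\frac{|C|}{4}$ for $i\in C$ and $q^+_{i,i}=\frac{|C|}{4}+|E(P_{i-i^*})|$ for $i\notin C$ established in the proof of the preceding theorem. Your observation that $|E(P_{i-i^*})|\geq 1$ whenever $i\notin C$ is precisely the point that makes the converse direction work, and it is handled correctly.
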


\begin{example}
For the odd unicyclic graph given in Figure \ref{fig:odd unicyclic}, 
$$Q= \left[\begin{array}{rrrrrrr}
3 & 0 & 1 & 0 & 1 & 1 & 0 \\
0 & 1 & 0 & 0 & 1 & 0 & 0 \\
1 & 0 & 2 & 0 & 0 & 1 & 0 \\
0 & 0 & 0 & 1 & 0 & 1 & 0 \\
1 & 1 & 0 & 0 & 3 & 0 & 1 \\
1 & 0 & 1 & 1 & 0 & 3 & 0 \\
0 & 0 & 0 & 0 & 1 & 0 & 1
\end{array}\right] 
\text{ and }
Q^{-1}= \frac{1}{4} \left[\begin{array}{rrrrrrr}
3 & 3 & -1 & 1 & -3 & -1 & 3 \\
3 & 11 & -1 & 1 & -7 & -1 & 7 \\
-1 & -1 & 3 & 1 & 1 & -1 & -1 \\
1 & 1 & 1 & 7 & -1 & -3 & 1 \\
-3 & -7 & 1 & -1 & 7 & 1 & -7 \\
-1 & -1 & -1 & -3 & 1 & 3 & -1 \\
3 & 7 & -1 & 1 & -7 & -1 & 11
\end{array}\right].$$
\end{example}

For an odd unicyclic graph $G$ with the cycle $C$ and for distinct edges $e_i$ and $e_j$ on $C$, $P_{e_i-e_j}$ denotes the shortest path between vertices of edges $e_i$ and $e_j$ and $G\setminus{\{e_i,e_j\}}[P_{e_i-e_j}]$ denotes the connected component of $G\setminus{\{e_i,e_j\}}$ that contains $P_{e_i-e_j}$.

\begin{lemma}\label{parity of e_i,e_j}
Let $G$ be an odd unicyclic graph on $n$ vertices $1,2,\ldots,n$ and edges $e_1,e_2,\ldots,e_n$ with the cycle $C$. For distinct $e_i$ and $e_j$, the following hold:
\begin{enumerate}
    \item[(a)] If $e_i,e_j \in C$ and $k \in G\setminus{\{e_i,e_j\}}[P_{e_i-e_j}]$, then $d(e_i,k)+d(e_j,k)$ and $d(e_i,e_j)$ have the same parity.
    
    \item[(b)] If $e_i,e_j \in C$ and $k \notin G\setminus{\{e_i,e_j\}}[P_{e_i-e_j}]$, then $d(e_i,k)+d(e_j,k)$ and $d(e_i,e_j)$ have different parity. 
    
    \item[(c)] If $e_i,e_j \notin C$ and $k\in \big(G\setminus e_i(C))\cap (G\setminus e_j(C)\big)$, then $d(e_i,k)+d(e_j,k)$ and $d(e_i,e_j)$ have different parity. 
    
    \item[(d)] If $e_i \in C$, $e_j \notin C$, and $k\notin G\setminus e_j[C]$, then $d(e_i,k)+d(e_j,k)$ and $d(e_i,e_j)$ have different parity. 
\end{enumerate}
\end{lemma}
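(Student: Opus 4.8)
The plan is to prove each of the four parity statements by the same elementary strategy used in Lemmas \ref{ij-path} and \ref{parity G[e_i,e_j]}: decompose the quantity $d(e_i,k)+d(e_j,k)$ into pieces involving a common path and show the nonmatching parts contribute an even amount (for ``same parity'') or an odd amount (for ``different parity''). In each case the key is to identify where the shortest paths from $k$ to $e_i$ and to $e_j$ first meet, and to express $d(e_i,k)+d(e_j,k)$ as $2\cdot(\text{common length}) + d(e_i,e_j) + \varepsilon$ where $\varepsilon\in\{0,1\}$ records whether an extra edge separating $e_i$ from $e_j$ gets counted.

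For part (a), since $e_i,e_j\in C$ and $k\in G\setminus\{e_i,e_j\}[P_{e_i-e_j}]$, the vertex $k$ hangs off the segment of the graph containing $P_{e_i-e_j}$. I would let $d$ be the distance from $k$ to its closest vertex on $P_{e_i-e_j}$ and argue that $d(e_i,k)+d(e_j,k)=2d+d(e_i,e_j)$, exactly as in Lemma \ref{parity G[e_i,e_j]}(a); hence same parity. For part (b), $k$ lies on the ``other side'' of the cycle relative to $P_{e_i-e_j}$, so shortest paths from $k$ reach $e_i$ and $e_j$ via the complementary arc; here $d(e_i,k)+d(e_j,k)=2d + (|C|-1-d(e_i,e_j)) + \text{(const)}$ type expressions, and since $C$ is odd ($|C|$ odd, so $|C|-1$ even) one checks the parity flips relative to $d(e_i,e_j)$, giving different parity. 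The oddness of the cycle is precisely what makes (b) work and is the conceptual crux distinguishing this from the tree case.

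For part (c), both $e_i,e_j\notin C$ and $k$ lies in the intersection $(G\setminus e_i(C))\cap(G\setminus e_j(C))$, i.e. in the ``pendant'' region cut off from $C$ by both edges; mimicking Lemma \ref{ij-path}(c) and the Case~3 analysis in the $M^{-1}$ theorem, $e_i$ and $e_j$ both lie on $P_{k-k^*}$, and one edge is one step closer to $k$ than the other, so $d(e_i,k)+d(e_j,k)=d(e_i,e_j)\pm 1$ up to an even doubling, forcing different parity. Part (d) is the mixed case, $e_i\in C$ and $e_j\notin C$ with $k\notin G\setminus e_j[C]$ (so $k$ is separated from $C$ by $e_j$); here $e_j$ lies on $P_{k-C}$ while $e_i$ sits on $C$, and routing from $k$ to $e_i$ passes through $e_j$ plus an arc of the odd cycle, again producing the extra $\pm1$ that flips parity.

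I expect the main obstacle to be part (b), and to a lesser extent (d), because both genuinely use the hypothesis that the unique cycle is odd: the contribution of traversing the cycle must be carefully tracked, and one must verify that for every placement of $k$ outside $G\setminus\{e_i,e_j\}[P_{e_i-e_j}]$ the shortest paths to $e_i$ and $e_j$ really do wrap around the complementary (odd-length) arc rather than back along $P_{e_i-e_j}$. The bookkeeping of distances around the cycle, together with the reduction $d(e_i,e_j)=\min\{d(l_i,e_j),d(m_i,e_j)\}$ from the definitions, is where a careful case split (on which endpoints realize the relevant minima) will be needed; the remaining parts (a) and (c) follow the tree-based templates almost verbatim.
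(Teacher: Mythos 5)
Your overall strategy is the same as the paper's, and your sketches of (a), (c) and (d) match its proof essentially step for step: in (a) one writes $d(e_i,k)+d(e_j,k)=2d(k,k^*)+d(e_i,e_j)$, and in (c) and (d) the farther of the two edges from $k$ satisfies $d(e_{\mathrm{far}},k)=d(e_{\mathrm{near}},k)+1+d(e_i,e_j)$, so the sum is $2d(e_{\mathrm{near}},k)+1+d(e_i,e_j)$ and the parity flips. The gap is in part (b), in two places. First, your arc count is off by one: deleting the two edges $e_i,e_j$ from $C$ leaves two arcs with $d(e_i,e_j)$ and $|C|-2-d(e_i,e_j)$ edges (not $|C|-1-d(e_i,e_j)$), since the deleted edges themselves are not counted. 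This matters because, as you wrote it, $|C|-1$ is even, so $2d+\big(|C|-1-d(e_i,e_j)\big)$ would have the \emph{same} parity as $d(e_i,e_j)$ --- your stated arithmetic points to the opposite of the claimed conclusion. The correct flip comes from $|C|-2$ being odd.

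Second, your plan for (b) is to verify that the shortest paths from $k$ to $e_i$ and to $e_j$ ``really do wrap around the complementary arc rather than back along $P_{e_i-e_j}$,'' but that verification fails in general. If $k^*$ lies on the complementary arc close to $e_i$ while $P_{e_i-e_j}$ is short, the shortest path from $k^*$ to $e_j$ passes through $e_i$ and then runs along $P_{e_i-e_j}$, i.e.\ $E(P_{k^*-e_i})\cap E(P_{k^*-e_j})\neq\varnothing$. The paper does not rule this out; it treats it as a second subcase, in which $d(e_j,k^*)=d(e_i,k^*)+1+d(e_i,e_j)$ and hence $d(e_i,k)+d(e_j,k)=2d(k,k^*)+2d(e_i,k^*)+1+d(e_i,e_j)$, again of different parity (and, notably, this subcase does not use the oddness of $|C|$ at all). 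To complete (b) you need both the corrected count $|C|-2-d(e_i,e_j)$ in the disjoint-paths subcase and this separate computation in the overlapping-paths subcase.
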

\begin{proof}
\begin{enumerate}
\item[(a)] Suppose $e_i,e_j \in C$ and $k \in G\setminus{\{e_i,e_j\}}[P_{e_i-e_j}]$. Then $$d(e_i,k)+d(e_j,k)=2d(k,k^*)+d(e_i,e_j).$$
Thus $d(e_i,k)+d(e_j,k)$ and $d(e_i,e_j)$ have the same parity.

\item[(b)] Suppose $e_i,e_j \in C$ and $k \notin G\setminus{\{e_i,e_j\}}[P_{e_i-e_j}]$. When $E(P_{k^*-e_i})\cap E(P_{k^*-e_j}) = \varnothing$,
\begin{align*}d(e_i,k)+d(e_j,k)&=2d(k,k^*)+d(e_i,k^*)+d(e_j,k^*)\\
&=2d(k,k^*)+|C|-2-d(e_i,e_j).\end{align*}

When $E(P_{k^*-e_i})\cap E(P_{k^*-e_j}) \neq \varnothing$, 
$d(e_i,k)+d(e_j,k)=2d(k,k^*)+d(e_i,k^*)+d(e_j,k^*)$
which is $2d(k,k^*)+2d(e_i,k^*)+1+d(e_i,e_j)$ or $2d(k,k^*)+2d(e_j,k^*)+1+d(e_i,e_j)$.

Thus $d(e_i,k)+d(e_j,k)$ and $d(e_i,e_j)$ have different parity. 

\item[(c)] Suppose $e_i,e_j \notin C$ and $k\in \big(G\setminus e_i(C))\cap (G\setminus e_j(C)\big)$. Note that $P_{e_i-i^*} \subseteq P_{e_j-j^*}$ or $P_{e_j-j^*} \subseteq P_{e_i-i^*}$. Thus $d(e_i,k)+d(e_j,k)$ is $2d(e_j,k)+1+d(e_i,e_j)$ or $2d(e_i,k)+1+d(e_i,e_j)$. Thus $d(e_i,k)+d(e_j,k)$ and $d(e_i,e_j)$ have different parity.

\item[(d)] Suppose $e_i \in C$, $e_j \notin C$, and $k\notin G\setminus e_j[C]$. Then $$d(e_i,k)+d(e_j,k)=2d(e_j,k)+1+d(e_i,e_j).$$
Thus $d(e_i,k)+d(e_j,k)$ and $d(e_i,e_j)$ have different parity.
\end{enumerate}
\end{proof}

\begin{theorem}
Let $G$ be an odd unicyclic graph on $n$ vertices $1,2,\ldots,n$ with edges $e_1,e_2,\ldots,e_n$ and the incidence matrix $M$. Let $S=M^TM$ be the signless edge-Laplacian of $G$. Then the inverse $S^{-1}=[s_{i,j}^+]$ of $S$ is given by
$$ s^+_{i,j}=\frac{(-1)^{d(e_i,e_j)}}{4}\begin{cases} 
n                       & \text{ if }  i=j,\; e_i\in C\\
4|G\setminus e_i(C)|    & \text{ if }  i=j,\; e_i\notin C\\
-4|\big(G\setminus e_i(C))\cap (G\setminus e_j(C)\big)| & \text{ if } i\neq j,\; e_i,e_j\notin C\\
2|G\setminus \{e_i,e_j\}[P_{e_i-e_j}]|-n & \text{ if } i\neq j,\; e_i,e_j\in C\\
-2|G\setminus e_i(C)|-2|G\setminus e_j(C)| & \text{ otherwise.} 
\end{cases}$$
\end{theorem}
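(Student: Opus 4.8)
The plan is to expand $S^{-1}=M^{-1}(M^{-1})^T$ entrywise. Writing $M^{-1}=[a_{i,j}]$ with the closed form from (\ref{unicyclic M inverse}), the $(i,j)$-entry is
$$s^+_{i,j}=\sum_{k\in V(G)} a_{i,k}\,a_{j,k},$$
so the whole computation reduces to determining, in each case, which vertices $k$ make the product $a_{i,k}a_{j,k}$ nonzero and then collapsing the resulting sign $(-1)^{d(e_i,k)+d(e_j,k)}$ to $(-1)^{d(e_i,e_j)}$ using Lemma \ref{parity of e_i,e_j}. For the diagonal $i=j$ there is nothing subtle: if $e_i\in C$ then $a_{i,k}=\pm\tfrac12$ for every $k$, so $a_{i,k}^2=\tfrac14$ and the sum is $n/4$; if $e_i\notin C$ then $a_{i,k}^2$ equals $1$ exactly when $k\notin G\setminus e_i[C]$, i.e. $k\in G\setminus e_i(C)$, giving $|G\setminus e_i(C)|$. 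Since $d(e_i,e_i)=0$, both agree with the stated formula.

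For $i\neq j$ I would treat the three geometric configurations separately. When $e_i,e_j\notin C$, the product is nonzero exactly on the overlap $\big(G\setminus e_i(C)\big)\cap\big(G\setminus e_j(C)\big)$, on which $a_{i,k}a_{j,k}=(-1)^{d(e_i,k)+d(e_j,k)}$; Lemma \ref{parity of e_i,e_j}(c) forces this exponent to have parity opposite to $d(e_i,e_j)$, so each surviving term equals $-(-1)^{d(e_i,e_j)}$ and the sum is $-(-1)^{d(e_i,e_j)}\big|\big(G\setminus e_i(C)\big)\cap\big(G\setminus e_j(C)\big)\big|$, matching the third branch. The mixed case (one edge on $C$, one off, say $e_i\in C$, $e_j\notin C$) is handled the same way: the product is supported on $G\setminus e_j(C)$, each term is $\tfrac12(-1)^{d(e_i,k)+d(e_j,k)}$, and Lemma \ref{parity of e_i,e_j}(d) again flips the parity, yielding $-\tfrac12(-1)^{d(e_i,e_j)}|G\setminus e_j(C)|$. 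Under the convention that $G\setminus e(C)$ is empty when $e\in C$, the term $|G\setminus e_i(C)|$ vanishes, so this equals the ``otherwise'' branch $\tfrac{(-1)^{d(e_i,e_j)}}{4}\big(-2|G\setminus e_i(C)|-2|G\setminus e_j(C)|\big)$; symmetry in $i,j$ covers the remaining orientation.

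The case I expect to be the most delicate is $i\neq j$ with both $e_i,e_j\in C$. Here every vertex contributes, since $a_{i,k},a_{j,k}=\pm\tfrac12$ for all $k$, so $s^+_{i,j}=\tfrac14\sum_k(-1)^{d(e_i,k)+d(e_j,k)}$. The point is to split $V(G)$ according to Lemma \ref{parity of e_i,e_j}(a)(b): vertices in $G\setminus\{e_i,e_j\}[P_{e_i-e_j}]$ contribute $+(-1)^{d(e_i,e_j)}$ and the rest contribute $-(-1)^{d(e_i,e_j)}$. For this I first need the structural fact that deleting two cycle edges from the odd unicyclic graph $G$ (which has exactly $n$ edges) leaves precisely two components---one being $G\setminus\{e_i,e_j\}[P_{e_i-e_j}]$ and the other its complement in $V(G)$, of size $n-|G\setminus\{e_i,e_j\}[P_{e_i-e_j}]|$. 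Granting this, the sum telescopes to $\tfrac{(-1)^{d(e_i,e_j)}}{4}\big(|G\setminus\{e_i,e_j\}[P_{e_i-e_j}]|-(n-|G\setminus\{e_i,e_j\}[P_{e_i-e_j}]|)\big)=\tfrac{(-1)^{d(e_i,e_j)}}{4}\big(2|G\setminus\{e_i,e_j\}[P_{e_i-e_j}]|-n\big)$, the fourth branch. The only real work is verifying that the vertex partition induced by the component $G\setminus\{e_i,e_j\}[P_{e_i-e_j}]$ coincides exactly with the partition in Lemma \ref{parity of e_i,e_j}, and that the two edge removals genuinely disconnect $G$ into two pieces; once those are pinned down the sign bookkeeping is routine.
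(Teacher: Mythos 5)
Your proposal is correct and follows essentially the same route as the paper: expand $s^+_{i,j}=\sum_{k}a_{i,k}a_{j,k}$ using the entrywise formula for $M^{-1}$, identify the support of $a_{i,k}a_{j,k}$ in each of the same case splits, and collapse the signs via Lemma \ref{parity of e_i,e_j}(a)--(d), including the same observation that $G\setminus e_i(C)$ is empty when $e_i\in C$ in the mixed case. The structural point you flag for the two-cycle-edge case is handled in the paper simply by counting the complement as $n-|G\setminus\{e_i,e_j\}[P_{e_i-e_j}]|$ (connectivity of the complement is not actually needed), so no extra work is required there.
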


\begin{proof}
Since $M^{-1}=[a_{i,j}]$ and $S^{-1}=M^{-1}(M^{-1})^T$, $s^+_{i,j}=\sum_{k\in V(G)} a_{i,k}a_{j,k}$.  For $e_i=e_j \in C$, 
$$s^+_{i,i}=\sum_{k\in V(G)} a_{i,k}^2=\sum_{k\in V(G)} \frac{1}{4}=\frac{n}{4}.$$

For $e_i=e_j \notin C$, 
$$s^+_{i,i}=\sum_{k\in V(G)} a_{i,k}^2=\sum_{k\in G\setminus e_i[C]} a_{i,k}^2 +\sum_{k\notin G\setminus e_i[C]}a_{i,k}^2 =0+\sum_{k\notin G\setminus e_i[C]} 1= |G\setminus e_i(C)|.$$

\noindent Suppose $e_i\neq e_j$.\\

Case 1. $e_i,e_j \in C$
\begin{align*}
s^+_{i,j}&=\sum_{k\in V(G)} a_{i,k}a_{j,k} \\ 
&=\sum_{k \in G\setminus{\{e_i,e_j\}}[P_{e_i-e_j}]} \frac{(-1)^{d(e_i,k)+d(e_j,k)}}{4}
+\sum_{k \notin G\setminus{\{e_i,e_j\}}[P_{e_i-e_j}]} \frac{(-1)^{d(e_i,k)+d(e_j,k)}}{4}\\
&=\sum_{k \in G\setminus{\{e_i,e_j\}}[P_{e_i-e_j}]} \frac{(-1)^{d(e_i,e_j)}}{4}
+\sum_{k \notin G\setminus{\{e_i,e_j\}}[P_{e_i-e_j}]} \frac{-(-1)^{d(e_i,e_j)}}{4} &(\text{by Lemma }\ref{parity of e_i,e_j}(a)(b))\\
&=\frac{(-1)^{d(e_i,e_j)}}{4}\left(|G\setminus{\{e_i,e_j\}}[P_{e_i-e_j}]|-(n-|G\setminus{\{e_i,e_j\}}[P_{e_i-e_j}|])\right)\\
&=\frac{(-1)^{d(e_i,e_j)}}{4}\left(2|G\setminus{\{e_i,e_j\}}[P_{e_i-e_j}]|-n\right).\\
\end{align*}

Case 2. $e_i,e_j \notin C$\\
In this case, $a_{i,k}a_{j,k}\neq 0$ only for all $k\in \big(G\setminus e_i(C))\cap (G\setminus e_j(C)\big)$ (where $P_{e_i-i^*} \subseteq P_{e_j-j^*}$ or $P_{e_j-j^*} \subseteq P_{e_i-i^*}$).
\begin{align*}
s^+_{i,j}&=\sum_{k\in V(G)} a_{i,k}a_{j,k} \\ 
&=\sum_{k\in \big(G\setminus e_i(C))\cap (G\setminus e_j(C)\big)} (-1)^{d(e_i,k)+d(e_j,k)} \\
&=\sum_{k\in \big(G\setminus e_i(C))\cap (G\setminus e_j(C)\big)} -(-1)^{d(e_i,e_j)} & \text{(by Lemma \ref{parity of e_i,e_j}(c))} \\
&=-(-1)^{d(e_i,e_j)}  |\big(G\setminus e_i(C))\cap (G\setminus e_j(C)\big)|.\\
\end{align*}

Case 3. $e_i \in C$ and $e_j \notin C$\\
In this case, $a_{i,k}a_{j,k}\neq 0$ only for all $k\notin G\setminus e_j[C]$.
\begin{align*}
s^+_{i,j}&=\sum_{k\in V(G)} a_{i,k}a_{j,k} \\ 
&=\sum_{k\notin G\setminus e_j[C]} \frac{(-1)^{d(e_i,k)+d(e_j,k)}}{2} \\
&=\sum_{k\notin G\setminus e_j[C]} \frac{-(-1)^{d(e_i,e_j)}}{2} & \text{(by Lemma \ref{parity of e_i,e_j}(d))} \\
&=\frac{-(-1)^{d(e_i,e_j)}}{2}  |G\setminus e_j(C)|\\
&=\frac{-(-1)^{d(e_i,e_j)}}{2}  \big(|G\setminus e_i(C)|+|G\setminus e_j(C)|\big) &\text{(since $G\setminus e_i(C)$ is empty)}.
\end{align*}

Case 4. $e_i \notin C$ and $e_j \in C$\\
This case has a proof similar to that of Case 3.
\end{proof}

\begin{corollary}
Let $G$ be an odd unicyclic graph on $n$ vertices and $n$ edges $e_1,e_2,\ldots,e_n$ with the cycle $C$ and the incidence matrix $M$. Suppose $S=M^TM$ is the signless edge-Laplacian of $G$ and its inverse is $S^{-1}=[s_{i,j}^+]$. Edge $e_i$ is in $C$ if and only if $s_{i,i}^+=\frac{n}{4}$.  If $e_i$ is a pendant edge, then $s_{i,i}^+=1$. The converse is also true when $n\neq 4$.
\end{corollary}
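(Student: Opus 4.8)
The plan is to read every assertion directly off the diagonal case of the formula for $S^{-1}$ just established: since $d(e_i,e_i)=0$, we have $s_{i,i}^+=n/4$ when $e_i\in C$ and $s_{i,i}^+=|G\setminus e_i(C)|$ when $e_i\notin C$. Everything then reduces to comparing the (possibly fractional) number $n/4$ with the nonnegative integer $|G\setminus e_i(C)|$, so the whole corollary is really an exercise in sorting out when these two quantities coincide.

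First I would record the two forward implications, each a direct substitution. If $e_i\in C$ then $s_{i,i}^+=n/4$, which is one direction of the iff. If $e_i$ is a pendant edge, then it meets a degree-one vertex and hence lies on no cycle, so $e_i\notin C$; deleting $e_i$ isolates that leaf, so $G\setminus e_i(C)$ is a single vertex and $s_{i,i}^+=1$. This settles both ``$e_i\in C\Rightarrow s_{i,i}^+=n/4$'' and ``pendant $\Rightarrow s_{i,i}^+=1$''.

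Next I would handle the converse for pendant edges. Suppose $s_{i,i}^+=1$ and $n\neq4$. If $e_i\in C$ then $s_{i,i}^+=n/4=1$ forces $n=4$, contradicting the hypothesis, so $e_i\notin C$ and therefore $|G\setminus e_i(C)|=1$. A single-vertex away-component means the endpoint of $e_i$ in that component has degree one, i.e. $e_i$ is pendant. This also makes transparent why $n=4$ must be excluded: it is exactly the value at which a cycle edge acquires the pendant value $1$, so for $n=4$ the converse genuinely fails.

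The step I expect to be the real obstacle is the remaining (backward) direction of the iff, ``$s_{i,i}^+=n/4\Rightarrow e_i\in C$''. Contrapositively I must show that an off-cycle edge never produces the value $n/4$, i.e. $|G\setminus e_i(C)|\neq n/4$. When $4\nmid n$ this is immediate from integrality, since $|G\setminus e_i(C)|\in\mathbb{Z}$ while $n/4\notin\mathbb{Z}$. When $4\mid n$, however, integrality gives nothing and the claim is delicate: an off-cycle edge can have an away-component of size exactly $n/4$ (for instance, to a triangle attach a path on two new vertices at one cycle vertex and three pendants at another, giving $n=8$; the edge joining the triangle to that path is off-cycle with away-component of size $2=n/4$). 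This is the same collision phenomenon that forces the $n\neq4$ hypothesis in the pendant converse, and it is where I expect the argument to need either an extra hypothesis such as $4\nmid n$ or a finer structural analysis rather than the bare integrality comparison.
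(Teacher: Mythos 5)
The paper gives no explicit proof of this corollary; it is meant to be read off the diagonal case of the preceding theorem, which is exactly what you do, so your approach coincides with the intended one. Your two forward implications are correct, as is the pendant converse under $n\neq 4$: if $s^+_{i,i}=1$ and $e_i\in C$ then $n/4=1$ forces $n=4$, and otherwise $|G\setminus e_i(C)|=1$ means the far endpoint of $e_i$ has degree one. Your explanation of why $n=4$ must be excluded is also the right one, since for the triangle with one pendant vertex every cycle edge attains the value $n/4=1$.

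Your diagnosis of the remaining direction is likewise correct, and the obstacle you identify is not a gap in your argument but a flaw in the statement as printed. Your counterexample is valid: take the triangle on $a,b,c$, attach the path $a,x,y$ at $a$ and three pendant vertices at $b$, so that $n=8$; the edge $e=\{a,x\}$ is off the cycle and $|G\setminus e(C)|=|\{x,y\}|=2=n/4$, so $s^+_{i,i}=n/4$ without $e_i\in C$. (A triangle with a path on five further vertices attached gives the same collision at its third path edge.) When $4\nmid n$ your integrality observation does close the direction, since $|G\setminus e_i(C)|$ is an integer while $n/4$ is not; but when $4\mid n$ the biconditional genuinely fails, and the first sentence of the corollary needs either an added hypothesis such as $4\nmid n$ or to be weakened to the single implication $e_i\in C\Rightarrow s^+_{i,i}=n/4$. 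You should not look for a finer structural argument to finish this case: none exists, because the claim is false there.
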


\begin{example}
For the odd unicyclic graph given in Figure \ref{fig:odd unicyclic}, 
$$S =\left[\begin{array}{rrrrrrr}
2 & 0 & 1 & 1 & 0 & 1 & 1 \\
0 & 2 & 0 & 0 & 1 & 0 & 1 \\
1 & 0 & 2 & 0 & 0 & 1 & 0 \\
1 & 0 & 0 & 2 & 1 & 0 & 1 \\
0 & 1 & 0 & 1 & 2 & 0 & 1 \\
1 & 0 & 1 & 0 & 0 & 2 & 0 \\
1 & 1 & 0 & 1 & 1 & 0 & 2
\end{array}\right]
\text{ and }
S^{-1}= \frac{1}{4}\left[\begin{array}{rrrrrrr}
12 & 0 & -4 & -6 & 6 & -4 & -6 \\
0 & 4 & 0 & 2 & -2 & 0 & -2 \\
-4 & 0 & 4 & 2 & -2 & 0 & 2 \\
-6 & 2 & 2 & 7 & -5 & 2 & 1 \\
6 & -2 & -2 & -5 & 7 & -2 & -3 \\
-4 & 0 & 0 & 2 & -2 & 4 & 2 \\
-6 & -2 & 2 & 1 & -3 & 2 & 7
\end{array}\right].$$
\end{example}

\section{Open Problems}
We found combinatorial formulas for the Moore-Penrose inverses of the incidence matrix $M$, signless Laplacian $Q$, and signless edge-Laplacian $S$ for a tree (a bipartite graph) and an odd unicyclic graph (a non-bipartite graph).  There are still a lot of problems open such as the following:

\begin{enumerate}
    \item Find the Moore-Penrose inverses $M^+$, $Q^+$, and $S^+$ for bipartite graphs, in particular, for (a) even unicyclic graphs, (b) even bicyclic graphs.
    
    \item Find the Moore-Penrose inverses $M^+$, $Q^+$, and $S^+$ for non-bipartite graphs, in particular, for (a) odd bicyclic graphs, (b) complete multipartite graphs such as $K_{n_1,n_2,n_3}$.
\end{enumerate}

\vspace{12pt}
\noindent {\bf Acknowledgments}\\
The authors would like to thank Dr. R.B. Bapat whose research inspired the authors to write this article.

\end{document}